\pdfoutput=1
\documentclass[11pt,a4paper]{article}
\usepackage[utf8]{inputenc}
\usepackage[T1]{fontenc}
\usepackage[english]{babel}
\usepackage[final]{microtype}
\usepackage[a4paper,margin=2.5cm]{geometry}
\usepackage{setspace}
\usepackage{csquotes}
\usepackage{titlesec}
\titleformat{\section}{\large\bfseries}{\thesection}{0.6em}{}
\titleformat{\subsection}{\normalsize\bfseries}{\thesubsection}{0.6em}{}
\titleformat{\subsubsection}{\normalsize\itshape}{\thesubsubsection}{0.6em}{}

\usepackage{fancyhdr}
\usepackage{amsmath,amssymb,amsfonts}
\usepackage{amsthm}
\numberwithin{equation}{section}
\newtheorem{theorem}{Theorem}[section]
\newtheorem{lemma}[theorem]{Lemma}
\newtheorem{corollary}[theorem]{Corollary}
\newtheorem{proposition}[theorem]{Proposition}

\newtheorem{remark}[theorem]{Remark}

\usepackage{graphicx}
\graphicspath{{figures/}}
\usepackage{caption}
\usepackage{subcaption}
\usepackage{booktabs}
\usepackage{siunitx}
\usepackage{float}
\usepackage{placeins}
\usepackage{needspace}
\usepackage{diagbox}
\usepackage{listings}
\usepackage[dvipsnames]{xcolor}
\usepackage[hidelinks]{hyperref}
\hypersetup{
	pdftitle={Delay Placement Governs Feasibility and Hopf Bifurcation in a Pollution-Based Tourism Model},
	pdfauthor={Taylan Şengül and Bünyamin Kurtkaya},
	pdfsubject={Delay placement governs feasibility and Hopf bifurcation in a pollution-based tourism model},
	pdfkeywords={Hopf bifurcation, delay differential equations, time delay, positivity, delay placement, sustainable tourism},
	colorlinks=true,
	linkcolor=black,
	citecolor=MidnightBlue,
	urlcolor=MidnightBlue
}
\usepackage[nameinlink,noabbrev]{cleveref}
\usepackage{orcidlink}

\usepackage{tikz}
\usetikzlibrary{arrows.meta,positioning,calc,patterns}
\colorlet{slotone}{BrickRed}
\colorlet{slottwo}{Goldenrod!75!black}
\colorlet{slotthree}{MidnightBlue}

\usepackage[backend=biber,style=numeric, sorting=none, maxbibnames=99,uniquename=false,giveninits=true]{biblatex}
\newcommand{\keywords}[1]{\par\small\textbf{Keywords:}~#1}

\usepackage{authblk}

\title{Delay Placement Governs Feasibility and Hopf Bifurcation in a Pollution-Based Tourism Model}
\author[1]{Taylan Şengül\,\orcidlink{0000-0003-4841-7326}}
\author[1,2]{Bünyamin Kurtkaya\,\orcidlink{0009-0003-0290-5719}}
\affil[1]{Department of Mathematics, Marmara University, Göztepe Kampüsü, Fahrettin Kerim Gökay Caddesi, 34722 Kadıköy, İstanbul, Türkiye}
\affil[2]{Department of Basic Sciences, Turkish Naval Academy, National Defence University, 34940 Tuzla, İstanbul, Türkiye}
\affil[ ]{\textnormal{\texttt{taylan.sengul@marmara.edu.tr}\quad\texttt{bunyamin.kurtkaya@msu.edu.tr} (corresponding author)}}

\date{\today}

\begin{document}
\maketitle

\begin{abstract}
	We compare eight variants of a three-variable pollution-based tourism model obtained by assigning the same discrete delay to different terms.
	The delay may enter three slots: the visitor factor in the deterrence term, pollution perception, or visitation-driven growth and reinvestment.
	We call a placement feasible when every nonnegative initial history produces a solution that exists for all future time and remains nonnegative.
	For every $\tau>0$, feasibility holds exactly when the visitor factor in the deterrence term is current, which occurs in four of the eight placements.
	In three of these four feasible placements, the positive equilibrium is locally asymptotically stable for every delay.
	In the fourth---the perception-lag model, with delayed pollution perception but current visitation---a conjugate pair of characteristic roots crosses the imaginary axis at a finite critical delay for every set of positive parameters.
	The equilibrium then loses stability and does not restabilise as the delay increases.
	A center-manifold reduction and numerical parameter sweep show that both supercritical and subcritical Hopf bifurcations can occur.
	The visitor-delayed placement, closest to earlier delayed-tourism models, can undergo a local Hopf bifurcation under additional conditions, but it does not preserve nonnegativity for all initial histories.
\end{abstract}

\keywords{Hopf bifurcation, delay differential equations, time delay, positivity, delay placement, sustainable tourism}

\section{Introduction}

Protected areas are expected to reconcile ecological conservation with diverse social and economic objectives \cite{watson2014}.
The coupling between visitation, pollution, and capital naturally involves delays: visitors respond to previously perceived pollution, visitation-driven growth and reinvestment may lag visitation, and the deterrence term may depend on past rather than current visitor levels.
Whether the resulting dynamics remain physically feasible, however, may depend on which mechanism uses past information.

We build on the delayed environmental-protection expenditure model proposed by Russu \cite{russu2009}:

\begin{equation}\label{eq:russu}
	\begin{aligned}
		 & \dot V(t)=m_1 E(t)+m_2 K(t)-a V^2(t),                  \\
		 & \dot E(t)=r(\bar{P}-E(t))-b V(t-\tau)+c \eta V(t-\tau), \\
		 & \dot K(t)=(1-\eta) V(t-\tau)-\delta K(t).
	\end{aligned}
\end{equation}

Here $V$, $E$, and $K$ denote visitors, environmental quality, and capital stock, while $\tau$ is the delay.
Following Becker \cite{becker1982} and Cazzavillan and Musu \cite{cazzavillan2001}, Russu defines $E=\bar{P}-P$, where $P$ is the current pollution stock and $\bar{P}$ its maximum tolerable level.

Caraballo et al. \cite{caraballo2019} corrected Russu's equilibrium conditions and showed that \eqref{eq:russu} does not preserve nonnegativity: some solutions with nonnegative initial histories become negative.
Their proposed fix prevents solutions from crossing the environmental boundary $E=0$ by multiplying the delayed deterrence by the current environmental quality $E$.
Although they left the modified system unanalysed, we show that their modification while preserving nonnegativity also prevents Hopf bifurcation (Appendix~\ref{app:caraballo}).
He et al. \cite{he2022,he2023} added a second delay in the visitor equation and analysed a Hopf bifurcation.
Their delayed crowding term, however, can still push the visitor variable below zero from the boundary $V=0$ (Remark~\ref{rem:he-noninv}).

Delay-induced Hopf bifurcations are well studied in tourism, bioeconomic, pollution, and toxicant delay differential equation (DDE) models \cite{lacitignola2010,monfared2016,kaslik2020,liu2024,ardeuan2025,adelpini2025,dipesh2026}.
In models with nonnegative state variables, however, locating a Hopf bifurcation is not enough: one must first verify that nonnegative histories generate nonnegative solutions.
If a negative interaction uses a past value of a variable that is currently zero, the interaction need not vanish and may push that variable below zero \cite{seifert1976,martin1990}.
The trajectory then leaves the physically meaningful nonnegative state space \cite{dipesh2026,singh2025}.
Related work on delayed logistic equations shows that oscillations depend on how the delay enters the feedback, not only on its magnitude \cite{baker2020}.

Motivated by these observations, we compare a family of models for visitors $V$, pollution stock $P$, and capital $K$, obtained by assigning delays to three mechanisms in Russu's framework:
\begin{equation}\label{eq:family}
	\begin{aligned}
		 & \dot V=v_{1}V(t-\tau_{3})-v_{2}V(t-\tau_{1})P(t-\tau_{2})+v_{3}K(t), \\
		 & \dot P=p_{1}V(t)-p_{2}P(t),                                      \\
		 & \dot K=k_{1}V(t-\tau_{3})-k_{3}K(t),
	\end{aligned}
\end{equation}
where $\tau_{1}$ is assigned to the visitor factor in the deterrence term, $\tau_{2}$ to pollution perception, and $\tau_{3}$ to visitation-driven growth and reinvestment (Figure~\ref{fig:schematic-mech}).
The growth and reinvestment terms share $\tau_{3}$ because both represent delayed expansion in response to visitation.
This is a modelling simplification, not a claim that the lags coincide; separating them would give sixteen placements.
Equation~\eqref{eq:family} is a mechanism-level reformulation rather than an algebraic transformation of \eqref{eq:russu}.
It replaces Russu's additive environmental-quality and crowding effects in the visitor equation by the per-capita growth rate $v_1-v_2P$, retains capital-driven visitor growth, and models pollution directly through an emission--decay balance.
Unlike Russu, we impose no finite upper bound on $P$.
Larger $P$ means more degradation, so the natural decay term appears as $-p_{2}P$.
Accordingly, the effect of environmental degradation on visitation enters through the deterrence term $-v_{2}VP$.
The model parameters are defined in Table~\ref{table1}.

\begin{table}[H]
	\centering
	\begin{tabular}{@{}cp{10cm}@{}}
		\toprule
		Parameter & Description                                 \\
		\midrule
		$v_{1}$   & Visitation-driven growth coefficient.       \\
		$v_{2}$   & Visitor--pollution deterrence coefficient.  \\
		$v_{3}$   & Capital-driven visitor growth coefficient. \\
		$p_{1}$   & Per-capita pollution emission coefficient. \\
		$p_{2}$   & Natural pollution decay rate.               \\
		$k_{1}$   & Visitation-to-capital conversion coefficient. \\
		$k_{3}$   & Capital depreciation rate.                  \\
		\bottomrule
	\end{tabular}
	\caption{Model parameters of the pollution-based tourism family \eqref{eq:family}.
		All are strictly positive.}
	\label{table1}
\end{table}

We focus on the common-delay placements satisfying
\[
	(\tau_{1},\tau_{2},\tau_{3})\in\{0,\tau\}^3.
\]
Each triple is a \emph{placement}; the eight placements are the corners of the \emph{placement cube} $\{0,\tau\}^3$, and selecting one gives the corresponding model (Figure~\ref{fig:schematic-cube}).
The placement names used below are shorthand for their distinguishing delay pattern; the triple gives the complete specification.
For example, the instantaneous-deterrence placement still delays growth and reinvestment, while the pollution-delayed placement delays both pollution perception and growth and reinvestment.
This restriction isolates placement effects while leaving the full multi-delay problem open.

\begin{figure}[t]
	\centering
	\begin{subfigure}[b]{0.46\textwidth}
		\centering
		\resizebox{\linewidth}{!}{%
			\begin{tikzpicture}[
					font=\footnotesize,
					>={Stealth[length=5pt]},
					var/.style={circle,draw,thick,minimum size=9.5mm,fill=#1!6,draw=#1!70!black},
					lag/.style={rectangle,rounded corners=2pt,draw=#1,fill=#1!12,text=#1,inner sep=1.5pt,font=\scriptsize\bfseries},
					elab/.style={font=\scriptsize,inner sep=1pt},
				]
				\node[var=OliveGreen] (P) at (-2.55,0) {$P$};
				\node[var=black] (V) at (0,0) {$V$};
				\node[var=black] (K) at (2.55,0) {$K$};
				\draw[->,thick] (V) to[bend right=35]
				node[elab,above=2pt] {$+\,p_1$} (P);
				\draw[-{Bar[width=5.5pt]},very thick,slottwo!85!black] (P) to[bend right=35]
				node[lag=slottwo,pos=0.3] {$\tau_2$}
				node[lag=slotone,pos=0.65] {$\tau_1$}
				node[elab,below=4pt,pos=0.4,text=black] {$-\,v_2 VP$}
				(V);
				\node[text=slottwo] at (-1.28,0) {$\circlearrowright\,-$};
				\draw[->,thick] (K) to[bend left=35]
				node[elab,below=2pt,pos=0.5] {$+\,v_3$} (V);
				\draw[->,thick] (V) to[bend left=35]
				node[lag=slotthree,pos=0.4] {$\tau_3$}
				node[elab,above=3pt,pos=0.78] {$+\,k_1$} (K);
				\draw[->,thick] (V.115) .. controls +(115:1.2) and +(65:1.2) .. (V.65)
				node[lag=slotthree,pos=0.5,above=7pt] {$\tau_3$};
				\node[elab] at (0.95,1.05) {$+\,v_1$};
				\draw[->,gray] (P.145) .. controls +(145:0.7) and +(-175:0.7) .. (P.-175);
				\node[elab,gray] at (-3.28,0.72) {$-\,p_2$};
				\draw[->,gray] (K.35) .. controls +(35:0.7) and +(-5:0.7) .. (K.-5);
				\node[elab,gray] at (3.28,0.72) {$-\,k_3$};
			\end{tikzpicture}}
		\caption{Feedback structure and delay slots}
		\label{fig:schematic-mech}
	\end{subfigure}
	\hfill
	\begin{subfigure}[b]{0.52\textwidth}
		\centering
		\resizebox{\linewidth}{!}{%
			\begin{tikzpicture}[>={Stealth[length=4pt]},font=\scriptsize]
				\def\s{3.0}
				\def\dx{1.3}
				\def\dy{1.0}
				\coordinate (f00) at (0,0);        
				\coordinate (f10) at (\s,0);       
				\coordinate (f01) at (0,\s);       
				\coordinate (f11) at (\s,\s);      
				\coordinate (b00) at (\dx,\dy);          
				\coordinate (b10) at (\s+\dx,\dy);       
				\coordinate (b01) at (\dx,\s+\dy);       
				\coordinate (b11) at (\s+\dx,\s+\dy);    
				\fill[pattern=north east lines,pattern color=red!45] (b00)--(b10)--(b11)--(b01)--cycle;
				\draw[red!50,thin] (b00)--(b10)--(b11)--(b01)--cycle;
				\draw[gray!70,thin,dashed] (f00)--(b00) (b00)--(b10) (b00)--(b01);
				\draw[gray,thin] (f10)--(b10) (f01)--(b01) (f11)--(b11);
				\fill[green!55!black,opacity=0.10] (f00)--(f10)--(f11)--(f01)--cycle;
				\draw[black!60,thin] (f00)--(f10)--(f11)--(f01)--cycle;
				\fill[green!45!black] (f00) circle (2.1pt);
				\fill[green!45!black] (f01) circle (2.1pt);
				\fill[green!45!black] (f11) circle (2.1pt);
				\node[Goldenrod!80!black,font=\normalsize] at (f10) {$\bigstar$};
				\node[red!70!black,font=\small] at (b01) {$\blacktriangle$};
				\draw[gray,fill=white] (b00) circle (2.1pt);
				\draw[gray,fill=white] (b10) circle (2.1pt);
				\draw[gray,fill=white] (b11) circle (2.1pt);
				\node[below left=0pt,align=right] at (f00) {delay-free\\$(0,0,0)$};
				\node[below right=0pt,align=left] at (f10) {perception-lag\\$(0,\tau,0)$};
				\node[above left=0pt,align=right] at (f01) {instantaneous-\\deterrence\\$(0,0,\tau)$};
				\node[below left=0pt,align=right,fill=white,fill opacity=0.85,text opacity=1,inner sep=1.5pt] at (f11) {pollution-delayed\\$(0,\tau,\tau)$};
				\node[gray,right=2pt,align=left,fill=white,fill opacity=0.85,text opacity=1,inner sep=1.5pt] at (b00) {visitor-only\\$(\tau,0,0)$};
				\node[gray,right=2pt,align=left] at (b10) {deterrence-\\delayed\\$(\tau,\tau,0)$};
				\node[above right=0pt,align=left] at (b01) {visitor-delayed\\$(\tau,0,\tau)$};
				\node[gray,above right=0pt,align=left] at (b11) {fully-delayed\\$(\tau,\tau,\tau)$};
				\coordinate (o) at (-1.6,1.6);
				\draw[->] (o)--++(0.85,0) node[right] {$\tau_2$};
				\draw[->] (o)--++(0,0.85) node[above] {$\tau_3$};
				\draw[->] (o)--++(0.5,0.36) node[above right,inner sep=0.5pt] {$\tau_1$};
				\begin{scope}[shift={(-0.3,-1.35)},font=\scriptsize]
					\fill[green!45!black] (0,0) circle (2.1pt);
					\node[right=3pt] at (0,0) {delay-independently stable};
					\node[Goldenrod!80!black,font=\normalsize] at (0,-0.45) {$\bigstar$};
					\node[right=3pt] at (0,-0.45) {Hopf at $\tau_{0}$};
					\node[red!70!black,font=\small] at (0,-0.90) {$\blacktriangle$};
					\node[right=3pt] at (0,-0.90) {infeasible; local Hopf under conditions};
					\draw[gray,fill=white] (0,-1.35) circle (2.1pt);
					\node[right=3pt] at (0,-1.35) {infeasible; not analysed};
				\end{scope}
			\end{tikzpicture}}
		\caption{Placement cube $\{0,\tau\}^3$ and outcomes}
		\label{fig:schematic-cube}
	\end{subfigure}
	\caption{Common-delay placements in \eqref{eq:family}.
		(a) Interactions among pollution $P$, visitors $V$, and capital $K$; coloured badges mark where each delay enters.
		(b) The eight placements shown as corners of $\{0,\tau\}^3$.
		On the forward-invariant shaded face $\tau_{1}=0$, three placements are delay-independently stable, while $(0,\tau,0)$ undergoes a Hopf bifurcation.
		The hatched face $\tau_{1}=\tau$ is not forward invariant; only $(\tau,0,\tau)$ is analysed locally.}
	\label{fig:schematic}
\end{figure}

\begin{table}[t]
	\centering
	\footnotesize
	\setlength{\tabcolsep}{3pt}
	\begin{tabular}{@{}llc>{\raggedright\arraybackslash}p{4.9cm}@{}}
		\toprule
		$(\tau_{1},\tau_{2},\tau_{3})$ & Placement                & Feasible?   & Result                                                           \\
		\midrule
		$(0,0,0)$                      & delay-free               & yes         & locally asymptotically stable equilibrium (Prop.~\ref{prop:nondelayed-stability})   \\
		$(0,0,\tau)$                   & instantaneous-deterrence & yes         & delay-independently stable (Thm.~\ref{thm:stable-feasible})      \\
		$(0,\tau,\tau)$                & pollution-delayed        & yes         & delay-independently stable (Thm.~\ref{thm:stable-feasible})         \\
		$(0,\tau,0)$                   & perception-lag           & yes         & Hopf bifurcation for every set of positive parameters (Thm.~\ref{thm:perception-hopf})        \\
		\midrule
		$(\tau,0,\tau)$                & visitor-delayed          & no          & Hopf bifurcation under transversality and nonresonance conditions (Prop.~\ref{prop:visitor-hopf})              \\
		$(\tau,\tau,\tau)$             & fully-delayed            & no          & not analysed                                                    \\
		$(\tau,0,0)$                   & visitor-only             & no          & not analysed                                                    \\
		$(\tau,\tau,0)$                & deterrence-delayed       & no          & not analysed                                                    \\
		\bottomrule
	\end{tabular}
	\caption{Eight common-delay placements for \eqref{eq:family}; slots are ordered as the visitor factor in deterrence, pollution perception, and visitation-driven growth and reinvestment.
		Stability is local and asymptotic; the visitor-delayed Hopf result requires a simple, nonresonant, transversal crossing.}
	\label{tab:corners}
\end{table}

Our main results are summarised in Table~\ref{tab:corners}.
The four placements on the face $\tau_{1}=0$ are precisely those for which every nonnegative history generates a global nonnegative solution.
Three have a delay-independently stable positive equilibrium; the perception-lag placement $(0,\tau,0)$ instead loses stability at a finite delay for every positive parameter set, and its Hopf bifurcation can be supercritical or subcritical.
The visitor-delayed comparison shows that a local Hopf bifurcation can also occur in a placement that is not feasible.

The paper is organised as follows.
Section~\ref{sec:analysis} proves the feasibility dichotomy, establishes global existence for the four feasible placements, develops the Hopf framework, applies it to the feasible placements, and gives a local treatment of one infeasible placement.
Section~\ref{sec:numerics} presents the supercritical and subcritical scenarios, numerical continuation, and the parameter dependence of the bifurcation direction.
Appendix~\ref{app:cm} contains the center-manifold reduction, and Appendix~\ref{app:caraballo} analyses the positivity-preserving modification of Caraballo et al.

\FloatBarrier
\section{Model and Analysis}\label{sec:analysis}

Throughout this section, all model parameters are positive and $\tau\ge0$.

\subsection{Common Equilibrium and Delay-Free Stability}\label{ssec:nondelayed}

At $\tau=0$, the models corresponding to all eight placements coincide, so the equilibrium and delay-free analysis are common to the whole family.
Write the common system as $du/dt=f(u)$, with $u=(V,P,K)$ and $f=(f_{V},f_{P},f_{K})$.
This system is quasi-positive \cite{efendiev2013}.
That is, its vector field points inward or tangentially on each boundary face of the nonnegative orthant:
\[
	f_V(0,P,K)=v_3K\ge0,\qquad f_P(V,0,K)=p_1V\ge0,\qquad f_K(V,P,0)=k_1V\ge0.
\]
Thus solutions with nonnegative initial data remain nonnegative.

\begin{proposition}\label{prop:positive-equilibrium}
	Every placement in the family \eqref{eq:family} has the same unique positive equilibrium $F$, given by
	\begin{equation}\label{eq:equilibrium}
		F=\left(V_{\infty}, P_{\infty}, K_{\infty}\right),
	\end{equation}
	where
	\begin{equation}\label{eq:equilibrium-formulas}
		V_{\infty}=\frac{v_1 p_2 k_3+v_3 p_2 k_1}{v_2 p_1 k_3}, \qquad
		P_{\infty}=\frac{p_1}{p_2} V_{\infty}, \qquad
		K_{\infty}=\frac{k_1}{k_3} V_{\infty}.
	\end{equation}
\end{proposition}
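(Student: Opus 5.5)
The plan is to reduce the question to the delay-free system. An equilibrium of a DDE is a constant solution $u(t)\equiv u^*$. For a constant solution every delayed value coincides with the current one: $V(t-\tau_i)=V(t)=V^*$ and $P(t-\tau_2)=P^*$, for each $i$ and each placement $(\tau_1,\tau_2,\tau_3)\in\{0,\tau\}^3$. Substituting a constant function into \eqref{eq:family} therefore gives the same algebraic system $f(u^*)=0$ for all eight placements, namely
\[
	v_1V-v_2VP+v_3K=0,\qquad p_1V-p_2P=0,\qquad k_1V-k_3K=0 .
\]
So the equilibrium sets of all placements coincide with that of the common system $du/dt=f(u)$ from Subsection~\ref{ssec:nondelayed}. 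It remains to solve this system in the open positive orthant.

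I will solve by elimination, working from the two linear equations first.
\begin{enumerate}
	\item The second and third equations give $P=\frac{p_1}{p_2}V$ and $K=\frac{k_1}{k_3}V$.
	\item Substituting these into the first equation yields
	\[
		V\Bigl(v_1+\frac{v_3k_1}{k_3}-\frac{v_2p_1}{p_2}V\Bigr)=0 .
	\]
	\item A positive equilibrium requires $V>0$, so we may divide by $V$. This leaves the linear equation $\frac{v_2p_1}{p_2}V=v_1+\frac{v_3k_1}{k_3}$.
	\item Its solution is the unique value $V_\infty=\frac{v_1p_2k_3+v_3p_2k_1}{v_2p_1k_3}$ in \eqref{eq:equilibrium-formulas}.
	\item Since all parameters are positive, $V_\infty>0$, and consequently $P_\infty>0$ and $K_\infty>0$.
\end{enumerate}

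Uniqueness comes from the factorisation in step 2. The only other root of the factored equation is $V=0$, which forces $P=K=0$; this is the trivial equilibrium, not a positive one.

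There is no substantial obstacle in this argument. The one point needing care is to state explicitly why delayed arguments collapse to current values at equilibrium, since that is what makes the equilibrium placement-independent.
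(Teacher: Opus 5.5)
Your proposal is correct and follows the paper's own proof essentially step for step. Both arguments collapse the delayed values at a constant solution, solve $\dot P=\dot K=0$ for $P$ and $K$ in terms of $V$, substitute into $\dot V=0$, and read off the unique positive root from the factorisation $V\bigl(v_1+\frac{v_3k_1}{k_3}-\frac{v_2p_1}{p_2}V\bigr)=0$.
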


\begin{proof}
	At an equilibrium, all delayed and current state values coincide.
	From $\dot P=\dot K=0$,
	\[
		P_\infty=\frac{p_1}{p_2}V_\infty,\qquad K_\infty=\frac{k_1}{k_3}V_\infty.
	\]
	Substituting into $\dot V=0$ gives
	\[
		V_\infty\left(v_1+\frac{v_3k_1}{k_3}-\frac{v_2p_1}{p_2}V_\infty\right)=0,
	\]
	whose unique positive solution is the value of $V_\infty$ in \eqref{eq:equilibrium-formulas}.
\end{proof}

\begin{remark}\label{rem:active}
	From \eqref{eq:equilibrium-formulas}, the equilibrium pollution level satisfies the identity
	\[
		v_{2}P_{\infty}-v_{1}=\frac{v_{3}k_{1}}{k_{3}}>0,
	\]
	which is used repeatedly in the stability and center-manifold analysis below.
\end{remark}

\begin{proposition}\label{prop:nondelayed-stability}
	For the delay-free system, the equilibrium $F$ is locally asymptotically stable.
\end{proposition}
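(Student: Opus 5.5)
We linearise the delay-free system at $F$ and apply the Routh--Hurwitz criterion to the resulting cubic characteristic polynomial. With $\tau=0$ the $V$-equation reads $\dot V=v_1V-v_2VP+v_3K$. Its partial derivatives at $F$ are
\[
	\partial_V f_V=v_1-v_2P_\infty=-\frac{v_3k_1}{k_3},\qquad \partial_P f_V=-v_2V_\infty,\qquad \partial_K f_V=v_3,
\]
where the first identity is Remark~\ref{rem:active}. The other two rows of the Jacobian are $(p_1,-p_2,0)$ and $(k_1,0,-k_3)$.

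Write $\alpha=v_3k_1/k_3>0$ and $\beta=v_2V_\infty>0$. We expand $\det(\lambda I-J)$ along the first row to get $\lambda^3+a_1\lambda^2+a_2\lambda+a_3$. The coefficients are
\[
	a_1=\alpha+p_2+k_3,
\]
\[
	a_2=\alpha p_2+\alpha k_3+p_2k_3+\beta p_1-v_3k_1=\alpha p_2+p_2k_3+\beta p_1,
\]
\[
	a_3=\alpha p_2k_3+\beta p_1k_3-v_3k_1p_2=\beta p_1k_3.
\]
The key cancellation is $\alpha k_3=v_3k_1$. It removes the only negative contributions, the ones coming from the $K\to V\to K$ loop. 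Hence $a_1,a_2,a_3>0$. These expansions should be double-checked term by term, since the cancellations carry the whole argument.

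It remains to verify the Hurwitz condition $a_1a_2>a_3$. Expanding $a_1a_2$, it contains the term $k_3\cdot\beta p_1=a_3$, and every other term is a product of positive quantities. Hence $a_1a_2-a_3>0$ strictly, for example because of the term $p_2\cdot p_2k_3>0$. Routh--Hurwitz then places all eigenvalues in the open left half-plane, and local asymptotic stability follows from the linearisation principle.

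The main obstacle is only bookkeeping. One must get the sign of the $(1,1)$ entry right via Remark~\ref{rem:active} and track the cancellation $\alpha k_3=v_3k_1$ in $a_2$ and $a_3$. Without that cancellation, $a_2$ and $a_3$ would contain the negative terms $-v_3k_1$ and $-v_3k_1p_2$, and positivity would not be evident.
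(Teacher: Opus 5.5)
Your proof is correct and follows the paper's route: linearise at $F$, use $v_1-v_2P_\infty=-v_3k_1/k_3$ to make the coefficients of the characteristic cubic positive, and apply Routh--Hurwitz. Your coefficients agree with the paper's $A_1,A_2,A_3$ after substituting $v_2p_1V_\infty=p_2(v_1+v_3k_1/k_3)$ from the equilibrium formula. Keeping $v_2V_\infty p_1$ symbolic is slightly tidier bookkeeping, since $a_3$ then appears directly as the single term $k_3\,v_2V_\infty p_1$ of $a_1a_2$, whereas the paper writes out $A_1A_2-A_3$ in full.
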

\begin{proof}
	At $F=\left(V_{\infty}, P_{\infty}, K_{\infty}\right)$, the Jacobian is
	\[
		J(F)=
		\begin{pmatrix}
			v_1-v_2 P_{\infty} & -v_2 V_{\infty} & v_3  \\
			p_1                & -p_2            & 0    \\
			k_1                & 0               & -k_3
		\end{pmatrix}.
	\]

	Expanding $\det(\lambda I-J(F))=\lambda^{3}+A_{1}\lambda^{2}+A_{2}\lambda+A_{3}$, using \eqref{eq:equilibrium-formulas} and the identity $v_1-v_2P_{\infty}=-v_3k_1/k_3$ of Remark~\ref{rem:active}, gives
	\[
		A_1=p_2+k_3+\frac{v_3k_1}{k_3},\qquad
		A_2=p_2\left(v_1+k_3+\frac{2v_3k_1}{k_3}\right),\qquad
		A_3=p_2(v_1k_3+v_3k_1),
	\]
	all positive, and
	\[
		A_1A_2-A_3 =
		p_2\left[
		p_2\left(v_1+k_3+\frac{2v_3k_1}{k_3}\right)
		+k_3^2
		+\frac{v_3k_1}{k_3}
		\left(v_1+2k_3+\frac{2v_3k_1}{k_3}\right)
		\right]>0.
	\]
	By the Routh--Hurwitz criterion, all eigenvalues of $J(F)$ have negative real parts and $F$ is locally asymptotically stable.
\end{proof}

\subsection{Feasibility and Global Existence}\label{ssec:noninv}

The phase space is the Banach space $C([-\tau,0],\mathbb{R}^{3})$ of history segments $x_{t}(\theta):=x(t+\theta)$, $\theta\in[-\tau,0]$, under the supremum norm, and the solution operators $\Phi(t):\phi\mapsto x_{t}$ form a semiflow.
Because each model obtained from \eqref{eq:family} has a $C^{1}$ right-hand side and involves at most the common discrete delay $\tau$, a history $\phi=(\phi_{V},\phi_{P},\phi_{K})$ determines a unique maximal solution on some interval $[0,T)$, depending continuously on $\phi$ \cite{hale1993}; the solution is \emph{global} when $T=\infty$.
The cone of nonnegative histories $C([-\tau,0],[0,\infty)^{3})$ is \emph{forward invariant} when $\Phi(t)$ maps it into itself, that is, when every nonnegative history yields a solution that stays nonnegative for as long as it exists.
We call a placement \emph{feasible} if every nonnegative history generates a global solution that remains nonnegative.
Forward invariance supplies the nonnegativity part of this definition; Lemma~\ref{lem:global-existence} below shows that it also entails global existence for the present family.
By Seifert's quasi-positivity criterion \cite{seifert1976,hal2011}, forward invariance holds if and only if $f_i(\phi)\ge0$ for every nonnegative history $\phi$ with $\phi_i(0)=0$.

\begin{lemma}[Feasibility dichotomy]\label{lem:cone-general}
	For every $\tau>0$, a placement of \eqref{eq:family} leaves the cone of nonnegative histories $C([-\tau,0],[0,\infty)^3)$ forward invariant if and only if $\tau_{1}=0$.
\end{lemma}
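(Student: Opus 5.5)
We use Seifert's quasi-positivity criterion as quoted before the lemma: the cone is forward invariant if and only if, for each component $i\in\{V,P,K\}$ and every nonnegative history $\phi$ with $\phi_i(0)=0$, we have $f_i(\phi)\ge0$. The plan is to check this componentwise for an arbitrary placement $(\tau_1,\tau_2,\tau_3)\in\{0,\tau\}^3$.

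The $P$ and $K$ components do not depend on the placement. If $\phi_P(0)=0$, then $f_P(\phi)=p_1\phi_V(0)\ge0$. If $\phi_K(0)=0$, then $f_K(\phi)=k_1\phi_V(-\tau_3)\ge0$. Neither condition ever fails, so the dichotomy is decided entirely by the $V$ component. There, for a nonnegative history with $\phi_V(0)=0$,
\[
	f_V(\phi)=v_1\phi_V(-\tau_3)-v_2\phi_V(-\tau_1)\phi_P(-\tau_2)+v_3\phi_K(0).
\]

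For sufficiency, suppose $\tau_1=0$. The deterrence term is then $v_2\phi_V(0)\phi_P(-\tau_2)=0$, so $f_V(\phi)=v_1\phi_V(-\tau_3)+v_3\phi_K(0)\ge0$. This holds whatever the values of $\tau_2$ and $\tau_3$, so the criterion is satisfied and the cone is forward invariant.

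For necessity, suppose $\tau_1=\tau>0$. We construct a nonnegative history that violates the criterion, and we would also exhibit an explicit trajectory leaving the cone to make the failure concrete. Take $\phi_K\equiv0$ and $\phi_P\equiv1$. Take $\phi_V$ continuous, nonnegative, with $\phi_V(0)=0$ and $\phi_V(-\tau)=1$; the linear ramp $\phi_V(\theta)=-\theta/\tau$ works.

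If $\tau_3=0$, the growth term $v_1\phi_V(0)$ vanishes, and $f_V(\phi)=-v_2\cdot1\cdot1<0$.

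The main obstacle is the case $\tau_3=\tau$. Now the growth term equals $v_1\phi_V(-\tau)$, which is evaluated at the same instant as the deterrence factor $\phi_V(-\tau_1)$. The sign is then $v_1-v_2\phi_P(-\tau_2)$ multiplied by $\phi_V(-\tau)$. We fix this by making $\phi_P$ large at the relevant point: take $\phi_P\equiv M$ with $M>v_1/v_2$, which covers both $\tau_2=0$ and $\tau_2=\tau$. Then $f_V(\phi)=v_1-v_2M<0$.

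In every case with $\tau_1=\tau$, the criterion fails. By continuity, $\dot V(0)<0$ while $V(0)=0$, so $V(t)<0$ for small $t>0$ and the solution leaves the cone. This completes both directions.
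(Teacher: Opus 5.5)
Your proof is correct and follows the paper's argument: Seifert's criterion reduces everything to the face $\{V=0\}$, the cross term vanishes there when $\tau_1=0$, and for $\tau_1=\tau$ a boundary history with $\phi_V(0)=\phi_K(0)=0$, $\phi_V(-\tau)>0$ and $\phi_P(-\tau_2)>v_1/v_2$ forces $\dot V(0)<0$. The only cosmetic difference is that the paper treats both values of $\tau_3$ at once through $v_1\phi_V(-\tau_3)\le v_1\phi_V(-\tau)$ (because $\phi_V(0)=0$), so your separate case $\tau_3=0$ is already covered by your choice $\phi_P\equiv M$.
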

\begin{proof}
	On the faces $\{P=0\}$ and $\{K=0\}$, the quasi-positivity inequalities hold unconditionally:
	\[
		f_{P}(\phi)\big|_{\phi_{P}(0)=0}=p_{1}\phi_{V}(0)\ge0,\qquad
		f_{K}(\phi)\big|_{\phi_{K}(0)=0}=k_{1}\phi_{V}(-\tau_{3})\ge0.
	\]
	The latter holds for either value of $\tau_{3}$.
	Thus only the face $\{V=0\}$ can obstruct invariance; there,
	\[
		f_{V}(\phi)\big|_{\phi_{V}(0)=0}=v_{1}\phi_{V}(-\tau_{3})-v_{2}\phi_{V}(-\tau_{1})\phi_{P}(-\tau_{2})+v_{3}\phi_{K}(0).
	\]
	If $\tau_{1}=0$, then $\phi_{V}(-\tau_{1})=0$, so the cross term vanishes and $f_{V}\big|_{\phi_{V}(0)=0}\ge0$.
	The cone is therefore forward invariant.
	If $\tau_{1}=\tau>0$, take any nonnegative history $\phi$ with
	\begin{equation}\label{eq:boundary-history}
		\phi_{V}(0)=\phi_{K}(0)=0, \qquad \phi_{V}(-\tau)>0, \qquad \phi_{P}(-\tau_{2})>v_{1}/v_{2}.
	\end{equation}
	Since $\phi_{V}(-\tau_{3})\in\{\phi_{V}(0),\phi_{V}(-\tau)\}$,
	\begin{equation}\label{eq:v_factored}
		\dot V(0)=v_{1}\phi_{V}(-\tau_{3})-v_{2}\phi_{V}(-\tau)\phi_{P}(-\tau_{2})
		\le -\phi_{V}(-\tau)\bigl(v_{2}\phi_{P}(-\tau_{2})-v_{1}\bigr)<0,
	\end{equation}
	so $V(t)<0$ for small $t>0$ and the cone is not forward invariant.
\end{proof}

By \eqref{eq:v_factored} the solution from the history \eqref{eq:boundary-history} satisfies $V(t_*)<0$ at a fixed time $t_*>0$; by continuous dependence, so does every solution from a nearby history.
Such histories include ones with strictly positive $V$ and $K$, which therefore cross the face $\{V=0\}$ in finite time.

\begin{lemma}\label{lem:global-existence}
	If the solution of a placement of \eqref{eq:family} with a nonnegative history remains nonnegative on its maximal interval $[0,T)$, then $T=\infty$.
	Moreover, its components are bounded on every finite interval.
\end{lemma}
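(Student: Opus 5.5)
The plan is to use nonnegativity to discard the only nonlinear term, which enters with a negative sign, and then dominate the solution by a linear system. Suppose the solution $(V,P,K)$ is nonnegative on $[0,T)$ with $T<\infty$; we aim for a contradiction. Nonnegativity gives $-v_{2}V(t-\tau_{1})P(t-\tau_{2})\le 0$ for $t\in[0,T)$, because the delayed arguments lie either in $[0,T)$ or in $[-\tau,0]$, where the history is nonnegative. Hence
\[
	\dot V(t)\le v_{1}V(t-\tau_{3})+v_{3}K(t),\qquad
	\dot P(t)\le p_{1}V(t),\qquad
	\dot K(t)\le k_{1}V(t-\tau_{3}).
\]
These hold for every placement, whatever the value of $\tau_{1}$.

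Next we integrate and pass to running maxima. Set $M(t):=\max_{s\in[-\tau,t]}\bigl(V(s)+P(s)+K(s)\bigr)$, which is nondecreasing. Since all components are nonnegative, each component is at most $M$. Integrating the three inequalities from $0$ to $t$ and adding gives
\[
	V(t)+P(t)+K(t)\le \|\phi\|_{1}+c\int_{0}^{t}M(s)\,ds,
\]
where $\|\phi\|_{1}:=\phi_{V}(0)+\phi_{P}(0)+\phi_{K}(0)$ and $c:=v_{1}+v_{3}+p_{1}+k_{1}$. Here we use $V(s-\tau_{3})\le M(s)$. The right-hand side is nondecreasing in $t$, so the same bound holds for $M(t)$, with $\|\phi\|_{1}$ replaced by $3\|\phi\|_{\infty}$ to account for the history portion. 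Gronwall's inequality then yields $M(t)\le 3\|\phi\|_{\infty}e^{ct}$. This gives the second assertion, boundedness of the components on every finite interval.

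For $T=\infty$, we would invoke the standard continuation theorem for retarded functional differential equations with locally Lipschitz right-hand side \cite{hale1993}: if $T<\infty$, the solution must leave every bounded set as $t\to T^{-}$. The bound $M(t)\le 3\|\phi\|_{\infty}e^{cT}$ on $[0,T)$ contradicts this, so $T=\infty$.

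The main obstacle is bookkeeping rather than anything deep. The delayed arguments reach back into the history interval, which is why the running maximum over $[-\tau,t]$ is used instead of the current value. The only real idea is that the sign of the single nonlinear term lets us drop it. Alternatively, one can apply the method of steps on intervals of length $\tau$; on each such interval the system is affine in the current values with known delayed inputs.
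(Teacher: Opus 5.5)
Your proof is correct and follows essentially the paper's route: you use nonnegativity to drop the cross term, apply Gronwall to a running supremum over $[-\tau,t]$, and conclude with the continuation criterion; the only difference is that you include $P$ in the summed quantity via $\dot P\le p_1V$, whereas the paper works with $S=V+K$ and bounds $P$ afterwards by variation of constants. Your closing aside on the method of steps is wrong for the placements with $\tau_1=\tau_2=0$, where the deterrence term $v_2V(t)P(t)$ is quadratic in the current values, but your main argument does not rely on it.
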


\begin{proof}
	While the solution is nonnegative the cross term $-v_2V(t-\tau_{1})P(t-\tau_{2})\le0$, so
	\[
		\dot V(t)\le v_1V(t-\tau_{3})+v_3K(t),\qquad \dot K(t)\le k_1V(t-\tau_{3}).
	\]
	Set $S=V+K$ and $c=v_1+k_1+v_3$.
	Since $t-\tau_{3}\in\{t,t-\tau\}$, both $V(t-\tau_{3})$ and $K(t)$ are bounded by $\sup_{t-\tau\le s\le t}S(s)$.
	Adding the two inequalities gives $\dot S(t)\le c\sup_{t-\tau\le s\le t}S(s)$.
	Write $m(t):=\sup_{-\tau\le s\le t}S(s)$ and note $m(t)=\max\bigl(m(0),\,\sup_{0\le u\le t}S(u)\bigr)$.
	Integrating over $[0,u]$ gives $S(u)\le m(0)+c\int_0^t m(r)\,dr$ for every $u\in[0,t]$.
	Taking the supremum over $u\in[0,t]$ bounds the second entry of the maximum by the right-hand side, which also dominates the first entry $m(0)$.
	Hence $m(t)\le m(0)+c\int_0^t m(r)\,dr$, and Gronwall's inequality yields $m(t)\le m(0)e^{ct}$.
	Thus $V$ and $K$ are bounded on every finite interval, and using variation of constants in $\dot P=p_1V-p_2P$ then bounds $P$.
	By the continuation criterion for retarded functional differential equations \cite[\S2.3]{hale1993}, $T=\infty$.
\end{proof}

\begin{corollary}\label{cor:global-feasible}
	Every nonnegative history for a feasible placement generates a unique global nonnegative solution.
\end{corollary}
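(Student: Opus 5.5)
The corollary follows by combining Lemma~\ref{lem:cone-general} with Lemma~\ref{lem:global-existence}. I will first fix a feasible placement and a nonnegative history $\phi\in C([-\tau,0],[0,\infty)^3)$.

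\emph{Local solution.} Since the right-hand side of \eqref{eq:family} is $C^1$ and involves only the discrete delays $0$ and $\tau$, the standard existence theory recalled at the start of this subsection \cite{hale1993} gives a unique maximal solution on some interval $[0,T)$ with $T>0$.

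\emph{Nonnegativity on $[0,T)$.} There are two routes, depending on how ``feasible placement'' is read.
\begin{itemize}
\item If feasibility is taken as the definition (global and nonnegative), there is nothing left to prove beyond the uniqueness already noted.
\item The substantive content comes from the characterisation $\tau_1=0$. In that case Lemma~\ref{lem:cone-general} shows that the cone of nonnegative histories is forward invariant. Hence $x_t\ge0$ for all $t\in[0,T)$, that is, the solution stays nonnegative on its whole maximal interval. Forward invariance is defined exactly as nonnegativity for as long as the solution exists, so this applies directly to the maximal solution.
\end{itemize}

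\emph{Globality.} With nonnegativity on $[0,T)$ in hand, the hypothesis of Lemma~\ref{lem:global-existence} holds. That lemma gives $T=\infty$, together with bounds on every finite interval.

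The only point needing care is consistency between the two uses of forward invariance: the Seifert criterion as invoked in Lemma~\ref{lem:cone-general} yields nonnegativity on the maximal existence interval, not just on a short initial interval. This holds because invariance is stated for as long as the solution exists. I expect no real obstacle here: uniqueness comes from local Lipschitz continuity, and the rest is the composition of the two lemmas.
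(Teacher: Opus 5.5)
Your proof is correct and takes the same approach as the paper. The paper's proof simply combines the forward invariance from Lemma~\ref{lem:cone-general} (the case $\tau_1=0$) with Lemma~\ref{lem:global-existence}. Your extra remarks on local existence and uniqueness, and on the fact that the definitional reading of ``feasible'' leaves only uniqueness to prove, are accurate, but they add nothing beyond that composition.
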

\begin{proof}
	Combine forward invariance from Lemma~\ref{lem:cone-general} with Lemma~\ref{lem:global-existence}.
\end{proof}

\begin{remark}\label{rem:he-noninv}
	The two-delay tourism models \cite{he2022,he2023} have visitor equation
	\[
		\dot V=\alpha_{1}V(t-\tau_{1})-\alpha_{2}V^{2}(t-\tau_{1})+\cdots,
	\]
	whose omitted terms carry current-time stock factors.
	For a nonnegative history with all current stocks zero, these terms vanish.
	If $\phi_{V}(-\tau_{1})>\alpha_{1}/\alpha_{2}$, then
	\[
		f_{V}(\phi)=\phi_{V}(-\tau_{1})\bigl(\alpha_{1}-\alpha_{2}\phi_{V}(-\tau_{1})\bigr)<0.
	\]
	Thus, by the same criterion, the cone is not forward invariant for $\tau_{1}>0$.
	The corrective factor of \cite{caraballo2019}, already adopted in \cite{he2023}, restores invariance only on the environmental boundary and leaves this obstruction intact.
\end{remark}

\subsection{The Characteristic Equation}\label{ssec:framework}

Every placement in the placement cube shares the equilibrium $F$.
A Hopf bifurcation can occur only when a pair of characteristic roots reaches the imaginary axis.
We therefore convert the search for imaginary roots into a polynomial root problem and then recover the delay values at which those roots occur.
Linearising any placement at $F$ yields a characteristic equation of the form
\begin{equation}\label{eq:quasipolynomial}
	\begin{aligned}
		\mathcal D(\lambda,\tau)&:=\mathcal P(\lambda)+\mathcal Q(\lambda)e^{-\lambda\tau}=0, \\
		\mathcal P(\lambda)&=\lambda^{3}+a_{0}\lambda^{2}+a_{1}\lambda+a_{2}, \\
		\mathcal Q(\lambda)&=b_{0}\lambda^{2}+b_{1}\lambda+b_{2}.
	\end{aligned}
\end{equation}
Here $\mathcal D$ is the \emph{characteristic function}, and the placement determines the coefficients $a_i,b_i$.
Solutions of \eqref{eq:quasipolynomial} are called \emph{characteristic roots}.
A nonzero purely imaginary characteristic root $\lambda=i\omega$, with $\omega>0$, requires $\mathcal P(i\omega)=-\mathcal Q(i\omega)e^{-i\omega\tau}$.
Taking squared moduli eliminates $\tau$ and, in the variable $\xi=\omega^{2}$, yields the auxiliary cubic
\begin{equation}\label{eq:cubic-in-z}
	\mathcal H(\xi) = \xi^3 + p \xi^2 + q \xi + n = 0,
\end{equation}
where
\begin{equation}\label{eq:cubic-coefficients}
	p =a_0^2-2 a_1 - b_0^2, \qquad
	q =a_1^2 - 2 a_0 a_2 -b_1^2 + 2 b_0 b_2, \qquad
	n =a_2^2-b_2^2.
\end{equation}
Since $\xi=\omega^{2}$ with $\omega>0$, only the positive roots of $\mathcal H$ are relevant in \eqref{eq:cubic-in-z}; we write $\Xi$ for their number, counted with multiplicity.

The characteristic coefficients of the analysed placements are summarised in Table~\ref{tab:coefficient-ledger}, using the abbreviations
\begin{equation}\label{eq:M-d}
	M:=v_{2}P_{\infty},\qquad d:=M-v_{1}=\frac{v_{3}k_{1}}{k_{3}}>0,
\end{equation}
the identity and sign coming from Remark~\ref{rem:active}.

\begin{table}[htbp]
	\centering
	\footnotesize
	\setlength{\tabcolsep}{4pt}
	\begin{tabular}{@{}lccccccc@{}}
		\toprule
		Placement & $a_0$ & $a_1$ & $a_2$ & $b_0$ & $b_1$ & $b_2$ & $\operatorname{sign}n$ \\
		\midrule
		$(0,0,\tau)$ & $p_2+k_3+M$ & $p_2k_3+2p_2M+k_3M$ & $2p_2k_3M$ & $-v_1$ & $-p_2v_1-k_3M$ & $-p_2k_3M$ & $+$ \\
		$(0,\tau,\tau)$ & $p_2+k_3+M$ & $p_2k_3+p_2M+k_3M$ & $p_2k_3M$ & $-v_1$ & $p_2d-k_3M$ & $0$ & $+$ \\
		$(0,\tau,0)$ & $p_2+d+k_3$ & $p_2d+p_2k_3$ & $0$ & $0$ & $p_2M$ & $p_2k_3M$ & $-$ \\
		$(\tau,0,\tau)$ & $p_2+k_3$ & $p_2M+p_2k_3$ & $p_2k_3M$ & $d$ & $p_2d$ & $0$ & $+$ \\
		\bottomrule
	\end{tabular}
	\caption{Characteristic coefficients for the four analysed placements in \eqref{eq:quasipolynomial}, where $M=v_{2}P_{\infty}$ and $d=M-v_{1}=v_{3}k_{1}/k_{3}>0$; the final column gives the sign of $n=a_{2}^{2}-b_{2}^{2}$.}
	\label{tab:coefficient-ledger}
\end{table}

Since $\mathcal H(0)=n\ne0$ for the rows in Table~\ref{tab:coefficient-ledger} and $\mathcal H(\xi)\to+\infty$ as $\xi\to+\infty$, the sign of $n$ controls the parity of $\Xi$, the number of positive roots counted with multiplicity:
\begin{equation}\label{eq:n-sign}
	n<0 \;\Longrightarrow\; \text{$\Xi$ odd},
	\qquad
	n>0 \;\Longrightarrow\; \text{$\Xi$ even}.
\end{equation}
In particular, $n<0$ forces a positive root, while $n>0$ is inconclusive, as the even count includes zero.

\begin{lemma}[Root-counting principle]\label{lem:root-counting}
	If, for a placement, $\Xi=0$ and $a_{2}+b_{2}\neq0$, then \eqref{eq:quasipolynomial} has no characteristic root on the imaginary axis for any $\tau\ge0$, and $F$ is locally asymptotically stable for every finite $\tau\ge0$, with no Hopf bifurcation.
\end{lemma}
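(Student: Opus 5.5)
The argument is a standard continuity-of-roots argument, anchored at $\tau=0$ by Proposition~\ref{prop:nondelayed-stability}.

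\textbf{Step 1: no root $\lambda=0$.} Since $\mathcal D(0,\tau)=\mathcal P(0)+\mathcal Q(0)=a_2+b_2$ for every $\tau$, the hypothesis $a_2+b_2\neq0$ rules out a zero characteristic root.

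\textbf{Step 2: no nonzero imaginary root.} Because the coefficients are real, roots come in conjugate pairs, so it suffices to treat $\lambda=i\omega$ with $\omega>0$. Such a root forces $|\mathcal P(i\omega)|^2=|\mathcal Q(i\omega)|^2$. Expanding with $\omega^2=\xi$ gives exactly $\mathcal H(\xi)=0$, with the coefficients in \eqref{eq:cubic-coefficients}. This identity should be checked once: the real and imaginary parts of $\mathcal P(i\omega)$ are $a_2-a_0\omega^2$ and $a_1\omega-\omega^3$, and those of $\mathcal Q(i\omega)$ are $b_2-b_0\omega^2$ and $b_1\omega$. Then $\xi=\omega^2>0$ would be a positive root of $\mathcal H$, contradicting $\Xi=0$. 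Hence for no $\tau\ge0$ does $\mathcal D(\cdot,\tau)$ have a root on the imaginary axis.

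\textbf{Step 3: continuation in $\tau$.} At $\tau=0$ the characteristic equation is $\mathcal P(\lambda)+\mathcal Q(\lambda)=0$, the characteristic polynomial of the delay-free Jacobian. All its roots lie in the open left half-plane by Proposition~\ref{prop:nondelayed-stability}. This needs consistency: the $\tau=0$ limit of each placement is the common delay-free system, so $\mathcal P+\mathcal Q=\det(\lambda I-J(F))$.

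We then invoke the standard continuity result for retarded quasipolynomials (e.g.\ Cooke--van den Driessche or Ruan--Wei). Since $\deg\mathcal Q<\deg\mathcal P$, the equation is of retarded type. The roots in $\operatorname{Re}\lambda>-\epsilon$ therefore depend continuously on $\tau\in[0,\infty)$. The sum of their multiplicities in the open right half-plane can change only when a root crosses the imaginary axis; no roots come in from infinity in the right half-plane as $\tau$ increases from $0$, including at $\tau=0^+$.

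By Steps 1 and 2 no crossing occurs. So for every finite $\tau\ge0$ all characteristic roots have negative real part, and the principle of linearized stability for RFDEs (Hale) gives local asymptotic stability of $F$. Hopf bifurcation needs a pair of roots on the imaginary axis, so none occurs.

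\textbf{Main obstacle.} The delicate point is the continuity argument at $\tau=0^+$. There the infinitely many new roots appear, and we must show they all enter from $\operatorname{Re}\lambda=-\infty$. The plan is to cite the known result that this holds for retarded quasipolynomials with $\deg\mathcal Q<\deg\mathcal P$. If a self-contained check is wanted, a root with $\operatorname{Re}\lambda\ge0$ and $|\lambda|$ large gives $|\mathcal P(\lambda)|>|\mathcal Q(\lambda)|\ge|\mathcal Q(\lambda)e^{-\lambda\tau}|$, which is impossible. This bounds all right-half-plane roots uniformly in $\tau$, so Rouché's theorem gives the continuity on a bounded region.
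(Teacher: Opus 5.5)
Your proposal is correct and follows essentially the same argument as the paper. You exclude $\lambda=0$ via $a_2+b_2\neq0$, exclude nonzero imaginary roots because $|\mathcal P(i\omega)|^2=|\mathcal Q(i\omega)|^2$ reduces to $\mathcal H(\omega^2)=0$, and then propagate the zero right-half-plane root count from $\tau=0$ (Proposition~\ref{prop:nondelayed-stability}) by continuity in $\tau$; your explicit bound $|\mathcal P(\lambda)|>|\mathcal Q(\lambda)|\ge|\mathcal Q(\lambda)e^{-\lambda\tau}|$ for $\operatorname{Re}\lambda\ge0$ and $|\lambda|$ large is a self-contained substitute for the paper's citation, and it is even uniform in all $\tau\ge0$ rather than only on compact delay intervals.
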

\begin{proof}
	Since $\Xi=0$ and at $\lambda=0$ the left side of \eqref{eq:quasipolynomial} equals $a_{2}+b_{2}\neq0$ for every $\tau$, no characteristic root lies on the imaginary axis.
	Characteristic roots depend continuously on $\tau$; at each delay only finitely many lie in any fixed right half-plane, and on a compact delay interval these remain uniformly bounded \cite{hale1993,ruan2003}.
	Consequently, the number with $\operatorname{Re}\lambda>0$ can change only when a characteristic root crosses the imaginary axis; roots cannot enter the right half-plane from infinity at a finite delay.
	As the axis is root-free, this number is independent of $\tau$.
	At $\tau=0$ it is zero (Proposition~\ref{prop:nondelayed-stability}), so all characteristic roots have negative real part and $F$ is locally asymptotically stable for every finite $\tau\ge0$.
\end{proof}

\subsection{The Critical Delay and Transversality}\label{ssec:crossing}

The results of this subsection are classical \cite{kuang1993,beretta2002,ruan2003}; we prove them once for the common characteristic form \eqref{eq:quasipolynomial}, so that each analysed placement is handled by substituting its coefficient row from Table~\ref{tab:coefficient-ledger}.

\begin{proposition}\label{prop:imaginary-roots}
	Assume that $\mathcal H$ has a positive root $\xi_{0}$ and that $\mathcal P$ and $\mathcal Q$ have no common zero on the imaginary axis.
	Set $\omega_{0}=\sqrt{\xi_{0}}$ and
	\begin{equation}\label{eq:CS}
		\begin{aligned}
			\mathcal{C}
			&=\frac{(b_{1}-a_{0}b_{0})\omega_{0}^{4}+(a_{0}b_{2}+a_{2}b_{0}-a_{1}b_{1})\omega_{0}^{2}-a_{2}b_{2}}
			{(b_{2}-b_{0}\omega_{0}^{2})^{2}+b_{1}^{2}\omega_{0}^{2}}, \\
			\mathcal{S}
			&=\frac{(a_{1}\omega_{0}-\omega_{0}^{3})(b_{2}-b_{0}\omega_{0}^{2})-(a_{2}-a_{0}\omega_{0}^{2})b_{1}\omega_{0}}
			{(b_{2}-b_{0}\omega_{0}^{2})^{2}+b_{1}^{2}\omega_{0}^{2}}.
		\end{aligned}
	\end{equation}
	Then $\lambda=\pm i\omega_{0}$ are characteristic roots of \eqref{eq:quasipolynomial} exactly at the delays
	\begin{equation}\label{tau0form}
		\tau_{0}^{(k)}=\frac{1}{\omega_{0}}\left\{\Theta(\omega_{0})+2k\pi\right\},\qquad
		\Theta(\omega_{0})=\begin{cases}
			\cos^{-1}(\mathcal{C}),      & \mathcal{S}\ge 0, \\[2pt]
			2\pi-\cos^{-1}(\mathcal{C}), & \mathcal{S}<0,
		\end{cases}
		\qquad k\in\mathbb{N}_{0}.
	\end{equation}
	All these delays are positive under the delay-free stability established in Proposition~\ref{prop:nondelayed-stability}.
\end{proposition}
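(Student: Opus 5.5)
Substitute $\lambda=i\omega_0$ into \eqref{eq:quasipolynomial} and solve for $e^{-i\omega_0\tau}=-\mathcal P(i\omega_0)/\mathcal Q(i\omega_0)$. The hypothesis that $\mathcal P$ and $\mathcal Q$ share no zero on the imaginary axis is what makes this division legitimate. If $\mathcal Q(i\omega_0)=0$, then $\mathcal H(\xi_0)=|\mathcal P(i\omega_0)|^2-|\mathcal Q(i\omega_0)|^2=0$ forces $\mathcal P(i\omega_0)=0$ as well, which is excluded. Hence $\mathcal Q(i\omega_0)\neq0$, and the denominator $(b_2-b_0\omega_0^2)^2+b_1^2\omega_0^2=|\mathcal Q(i\omega_0)|^2$ in \eqref{eq:CS} is positive.

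Next write
\[
\mathcal P(i\omega)=(a_2-a_0\omega^2)+i(a_1\omega-\omega^3),\qquad
\mathcal Q(i\omega)=(b_2-b_0\omega^2)+ib_1\omega .
\]
Multiplying $-\mathcal P/\mathcal Q$ by $\overline{\mathcal Q}/|\mathcal Q|^2$ and separating real and imaginary parts gives $\cos(\omega_0\tau)=\mathcal C$ and $\sin(\omega_0\tau)=\mathcal S$. The only bookkeeping is that $e^{-i\omega\tau}$ has imaginary part $-\sin\omega\tau$, so the sign convention must be fixed to match \eqref{eq:CS}. I will expand the numerators once and check them against the stated formulas. Because $\xi_0$ is a root of $\mathcal H$, we have $|\mathcal P(i\omega_0)|=|\mathcal Q(i\omega_0)|$, and therefore $\mathcal C^2+\mathcal S^2=1$. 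So a unique angle $\Theta\in[0,2\pi)$ satisfies $\cos\Theta=\mathcal C$ and $\sin\Theta=\mathcal S$. Since $\cos^{-1}$ takes values in $[0,\pi]$, this angle is $\cos^{-1}\mathcal C$ when $\mathcal S\ge0$ and $2\pi-\cos^{-1}\mathcal C$ when $\mathcal S<0$. The condition $\omega_0\tau\equiv\Theta \pmod{2\pi}$ with $\tau\ge0$ then yields exactly the family \eqref{tau0form}. Conversely, each such $\tau$ reverses the steps, so $i\omega_0$ is a root. The root $-i\omega_0$ follows by complex conjugation, because all coefficients are real.

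It remains to show the delays are positive. Only $k=0$ with $\Theta=0$ could give $\tau=0$. In that case $\tau_0^{(0)}=0$ would make $i\omega_0$ a root of $\mathcal P+\mathcal Q$, the characteristic polynomial at $\tau=0$. This contradicts Proposition~\ref{prop:nondelayed-stability}, which places all such roots in the open left half-plane. Hence $\Theta\in(0,2\pi)$ and every $\tau_0^{(k)}>0$.

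The main obstacle is the algebraic sign bookkeeping in matching $\mathcal C$ and $\mathcal S$ to \eqref{eq:CS}; everything else is structural. One further point needs care: the delay-free characteristic polynomial $\mathcal P+\mathcal Q$ must coincide with $\det(\lambda I-J(F))$. This holds because setting $\tau=0$ in any placement recovers the common system.
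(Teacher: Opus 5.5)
Your proposal is correct and follows essentially the same route as the paper: you identify the denominator in \eqref{eq:CS} as $|\mathcal Q(i\omega_0)|^2$ and show it is nonzero via the modulus identity $\mathcal H(\xi_0)=0$. You then separate real and imaginary parts to get $\cos(\omega_0\tau)=\mathcal C$ and $\sin(\omega_0\tau)=\mathcal S$ with $\mathcal C^2+\mathcal S^2=1$, and you rule out $\Theta=0$ using the delay-free stability of Proposition~\ref{prop:nondelayed-stability}.

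Your sign bookkeeping is right: the real part of $-\mathcal P\overline{\mathcal Q}/|\mathcal Q|^2$, and the negative of its imaginary part, reproduce the numerators in \eqref{eq:CS} exactly. Your remark that $\mathcal P+\mathcal Q=\det(\lambda I-J(F))$ states explicitly a step the paper leaves implicit.
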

\begin{proof}
	The common denominator in \eqref{eq:CS} is $(b_{2}-b_{0}\omega_{0}^{2})^{2}+b_{1}^{2}\omega_{0}^{2}=|\mathcal Q(i\omega_{0})|^{2}$.
	It is strictly positive: $\mathcal H(\xi_{0})=0$ is exactly the modulus identity $|\mathcal P(i\omega_{0})|^{2}=|\mathcal Q(i\omega_{0})|^{2}$, so a zero denominator would make $i\omega_{0}$ a common zero of $\mathcal P$ and $\mathcal Q$.
	The real and imaginary parts of \eqref{eq:quasipolynomial} at $\lambda=i\omega_{0}$ can therefore be solved for $\cos(\omega_{0}\tau)$ and $\sin(\omega_{0}\tau)$.
	This gives $\cos(\omega_{0}\tau)=\mathcal{C}$ and $\sin(\omega_{0}\tau)=\mathcal{S}$.
	A real $\tau$ satisfies both equations if and only if $\mathcal{C}^{2}+\mathcal{S}^{2}=1$, which is again the identity $\mathcal H(\xi_{0})=0$.
	The solutions are precisely the sequence \eqref{tau0form}; the case split in $\Theta$ is required because the principal value $\cos^{-1}(\mathcal{C})\in[0,\pi]$ fixes the angle only when $\mathcal{S}\ge0$.
	Finally, $\Theta(\omega_{0})>0$.
	Otherwise $\mathcal{C}=1$ and $\tau_{0}^{(0)}=0$, making $\lambda=i\omega_{0}$ a root of the delay-free characteristic polynomial, all of whose roots have negative real part by Proposition~\ref{prop:nondelayed-stability}.
	Hence every $\tau_{0}^{(k)}$ is positive.
\end{proof}

We write $\tau_{0}^{(k)}(\xi)$ for the delays \eqref{tau0form} associated with a positive root $\xi$ of $\mathcal H$; when $\Xi\ge1$, the \emph{critical delay} is $\tau_{0}=\min_{\xi}\tau_{0}^{(0)}(\xi)$, possibly attained by more than one root.
The pair $(F,\tau_{0})$ is called a \emph{Hopf point} when the critical roots have multiplicity one (simplicity), cross the imaginary axis with nonzero speed (transversality), and are the only roots on that axis at the same delay (nonresonance).
The resulting emergence of periodic solutions is the \emph{Hopf bifurcation}.

\begin{lemma}[Transversality]\label{lem:transversality}
	Let $\xi_{0}$ be a positive root of $\mathcal H$, assume that $\mathcal P$ and $\mathcal Q$ have no common zero on the imaginary axis, and let $\tau_{0}^{(k)}(\xi_{0})$ be any of the associated delays \eqref{tau0form}.
	Set $\omega_{0}=\sqrt{\xi_{0}}$.
	If $\mathcal H^{\prime}(\xi_{0})\ne0$, then the characteristic roots $\pm i\omega_{0}$ are simple, and the crossing speed
	\[
		\frac{d}{d\tau}\operatorname{Re}(\lambda(\tau))\Big|_{\tau=\tau_{0}^{(k)}(\xi_{0})}
	\]
	has the sign of $\mathcal H^{\prime}(\xi_{0})$; in particular it is nonzero, so the crossing is transversal.
\end{lemma}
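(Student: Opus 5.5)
We will use the standard implicit-differentiation argument of Cooke--van den Driessche type. Throughout, write $\lambda_0=i\omega_0$ and $\tau_0=\tau_{0}^{(k)}(\xi_0)$. By Proposition~\ref{prop:imaginary-roots}, $\mathcal D(i\omega_0,\tau_0)=0$, and $\mathcal Q(i\omega_0)\neq0$ because its modulus equals $|\mathcal P(i\omega_0)|$ and there is no common zero.

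\emph{Step 1: simplicity.} Suppose $\partial_\lambda\mathcal D(i\omega_0,\tau_0)=\mathcal P'-\tau_0\mathcal Q'e^{-\lambda\tau_0}+\ldots=0$. Substituting $e^{-\lambda\tau_0}=-\mathcal P/\mathcal Q$ gives
\[
	\partial_\lambda\mathcal D=\mathcal P'+\mathcal Q'e^{-\lambda\tau_0}-\tau_0\mathcal Q e^{-\lambda\tau_0}
	=\mathcal P'-\frac{\mathcal Q'\mathcal P}{\mathcal Q}+\tau_0\mathcal P .
\]
We will relate this to $\mathcal H'$. Introduce $F(\omega):=|\mathcal P(i\omega)|^2-|\mathcal Q(i\omega)|^2=\mathcal H(\omega^2)$, so that $F'(\omega_0)=2\omega_0\mathcal H'(\xi_0)$. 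A direct computation gives
\[
	F'(\omega)=2\operatorname{Re}\bigl(i\mathcal P'(i\omega)\overline{\mathcal P(i\omega)}-i\mathcal Q'(i\omega)\overline{\mathcal Q(i\omega)}\bigr).
\]
This representation will show that a double root forces $\mathcal H'(\xi_0)=0$, which contradicts the hypothesis.

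\emph{Step 2: sign of the crossing speed.} By the implicit function theorem, which applies once simplicity is established, there is a $C^1$ branch $\lambda(\tau)$ with $\lambda(\tau_0)=i\omega_0$. Moreover,
\[
	\Bigl(\frac{d\lambda}{d\tau}\Bigr)^{-1}
	=\frac{\mathcal P'(\lambda)+\mathcal Q'(\lambda)e^{-\lambda\tau}-\tau\mathcal Q(\lambda)e^{-\lambda\tau}}{\lambda\mathcal Q(\lambda)e^{-\lambda\tau}}
	=-\frac{\mathcal P'(\lambda)}{\lambda\mathcal P(\lambda)}+\frac{\mathcal Q'(\lambda)}{\lambda\mathcal Q(\lambda)}-\frac{\tau}{\lambda}.
\]
Since $\operatorname{sign}\operatorname{Re}(d\lambda/d\tau)=\operatorname{sign}\operatorname{Re}(d\lambda/d\tau)^{-1}$, and $\tau/\lambda$ is purely imaginary at $\lambda=i\omega_0$, we obtain
\[
	\operatorname{Re}\Bigl(\frac{d\lambda}{d\tau}\Bigr)^{-1}\Big|_{\tau_0}
	=\frac{1}{\omega_0}\operatorname{Re}\Bigl(\frac{i\mathcal P'\overline{\mathcal P}}{|\mathcal P|^2}-\frac{i\mathcal Q'\overline{\mathcal Q}}{|\mathcal Q|^2}\Bigr).
\]
Using $|\mathcal P(i\omega_0)|^2=|\mathcal Q(i\omega_0)|^2=:R>0$, this equals $F'(\omega_0)/(2\omega_0R)=\mathcal H'(\xi_0)/R$, which has the sign of $\mathcal H'(\xi_0)$. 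Here $R>0$ because $\mathcal Q(i\omega_0)\neq0$.

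\emph{Step 3: closing the simplicity argument.} The same identity gives
\[
	\frac{\partial_\lambda\mathcal D}{\lambda\mathcal Q e^{-\lambda\tau_0}}\Big|_{i\omega_0}=\text{(expression above)},
\]
whose real part is $\mathcal H'(\xi_0)/R\neq0$. Hence $\partial_\lambda\mathcal D(i\omega_0,\tau_0)\neq0$ and the root is simple. This should be carried out before Step 2, since Step 2 relies on it. For the conjugate root $-i\omega_0$, conjugation symmetry of the real-coefficient characteristic function gives the same conclusion.

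The main obstacle is the derivative identity for $F'$: verifying that $\frac{d}{d\omega}|\mathcal P(i\omega)|^2=2\operatorname{Re}\bigl(i\mathcal P'(i\omega)\overline{\mathcal P(i\omega)}\bigr)$, with the correct factor $i$ from the chain rule. We will check this by differentiating $\mathcal P(i\omega)\overline{\mathcal P(i\omega)}$ directly, using that $\overline{\mathcal P(i\omega)}=\mathcal P(-i\omega)$ for real coefficients.
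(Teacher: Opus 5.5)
Your proposal is correct and follows essentially the paper's route. Both arguments eliminate $e^{-\lambda\tau}$ via $\mathcal Q e^{-\lambda\tau}=-\mathcal P$ and show that $\operatorname{Re}\bigl(-\mathcal D_\lambda/\mathcal D_\tau\bigr)$ at $i\omega_0$ equals $\mathcal H'(\xi_0)/|\mathcal Q(i\omega_0)|^2$, from which simplicity and the crossing sign both follow; your identity $F'(\omega)=2\operatorname{Re}\bigl(i\mathcal P'\overline{\mathcal P}-i\mathcal Q'\overline{\mathcal Q}\bigr)$ simply makes explicit the evaluation the paper leaves to the reader. The first display of your Step 1, $\mathcal P'-\tau_0\mathcal Q'e^{-\lambda\tau_0}+\ldots$, is a garbled formula, but the line after it gives the correct $\partial_\lambda\mathcal D$.
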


\begin{proof}
	Write $\mathcal D_\lambda$ and $\mathcal D_\tau$ for the partial derivatives of the characteristic function.
	At $\lambda=i\omega_{0}$, one has $\mathcal D_\tau=-\lambda\mathcal Q(\lambda)e^{-\lambda\tau}=\lambda\mathcal P(\lambda)\ne0$ because $\mathcal Q(i\omega_{0})\ne0$.
	Using $\mathcal Q(\lambda)e^{-\lambda\tau}=-\mathcal P(\lambda)$ to eliminate the exponential gives
	\begin{equation}\label{eq:dldtau-inv}
		-\frac{\mathcal D_\lambda}{\mathcal D_\tau}
		=-\,\frac{\mathcal P^{\prime}(\lambda)+\mathcal Q^{\prime}(\lambda)e^{-\lambda\tau}}{\lambda\,\mathcal P(\lambda)}-\frac{\tau}{\lambda}.
	\end{equation}
	The last term is purely imaginary at $\lambda=i\omega_{0}$.
	Evaluating the remaining expression and using $\omega_{0}^{2}=\xi_{0}$ and $\mathcal H(\xi_{0})=0$ gives
	\[
		\operatorname{Re}\left(-\frac{\mathcal D_\lambda}{\mathcal D_\tau}\right)_{\lambda=i\omega_{0}}
		=\frac{\mathcal H^{\prime}(\xi_{0})}{|\mathcal Q(i\omega_{0})|^{2}},
	\]
	whose right-hand side is nonzero by hypothesis.
	Hence $\mathcal D_\lambda\ne0$, so the characteristic roots are simple and the implicit branch $\lambda(\tau)$ exists.
	Since $(d\lambda/d\tau)^{-1}=-\mathcal D_\lambda/\mathcal D_\tau$ and $\operatorname{Re}z$ and $\operatorname{Re}(1/z)$ have the same sign for $z\ne0$, the derivative has the sign of $\mathcal H^{\prime}(\xi_{0})$ and is nonzero.
\end{proof}

\begin{theorem}\label{thm:delay-hopf}
	Assume that $\mathcal P$ and $\mathcal Q$ have no common zero on the imaginary axis and that $a_{2}+b_{2}\neq0$.
	\begin{enumerate}
		\item If $\Xi\ge1$, let $\tau_{0}$ be the critical delay, attained at a positive root $\xi_{0}$ of $\mathcal H$, and set $\omega_{0}=\sqrt{\xi_{0}}$.
		      If $\mathcal H^{\prime}(\xi_{0})\ne0$ (simplicity and transversality) and $\pm i\omega_{0}$ are the only characteristic roots of \eqref{eq:quasipolynomial} on the imaginary axis at $\tau=\tau_{0}$ (nonresonance), then $F$ is locally asymptotically stable for $\tau\in[0,\tau_{0})$ and undergoes a Hopf bifurcation at $\tau=\tau_{0}$.
		\item If $\Xi=0$, then $F$ is locally asymptotically stable for every $\tau\ge0$, with no Hopf bifurcation.
	\end{enumerate}
\end{theorem}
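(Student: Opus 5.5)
Part 2 is exactly Lemma~\ref{lem:root-counting}: under $\Xi=0$ and $a_{2}+b_{2}\neq0$ the imaginary axis carries no characteristic root for any $\tau\ge0$, so I will simply invoke that lemma. The work is in part 1, which I plan to assemble from Proposition~\ref{prop:imaginary-roots}, Lemma~\ref{lem:transversality}, and the root-continuity argument already used in the proof of Lemma~\ref{lem:root-counting}.

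For stability on $[0,\tau_{0})$ I first show that no characteristic root lies on the imaginary axis for $\tau\in[0,\tau_{0})$. A root at $\lambda=0$ is excluded for every $\tau$ because $\mathcal D(0,\tau)=a_{2}+b_{2}\neq0$. Since the coefficients are real, roots come in conjugate pairs, so it suffices to consider a root $i\omega$ with $\omega>0$. Such a root forces $\xi=\omega^{2}$ to be a positive root of $\mathcal H$. By Proposition~\ref{prop:imaginary-roots}, which applies because $\mathcal P,\mathcal Q$ have no common imaginary zero, the delay must then be some $\tau_{0}^{(k)}(\xi)\ge\tau_{0}^{(0)}(\xi)\ge\tau_{0}$. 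I then repeat the continuity argument from the proof of Lemma~\ref{lem:root-counting}: roots depend continuously on $\tau$ and cannot enter the right half-plane from infinity. Hence the number of roots with positive real part is constant on $[0,\tau_{0})$, and it equals zero by Proposition~\ref{prop:nondelayed-stability}. This gives local asymptotic stability of $F$ on $[0,\tau_{0})$.

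At $\tau=\tau_{0}$ the pair $\pm i\omega_{0}$ is a pair of characteristic roots. Lemma~\ref{lem:transversality}, using $\mathcal H'(\xi_{0})\neq0$, shows these roots are simple and cross with nonzero speed whose sign is that of $\mathcal H'(\xi_{0})$. Nonresonance is a hypothesis. These are exactly the simplicity, transversality and nonresonance conditions of the Hopf bifurcation theorem for retarded FDEs \cite{hale1993}. The right-hand side is $C^{1}$, and I assume it is in fact smooth, which holds here since every placement is quadratic. Invoking that theorem with $\tau$ as the parameter yields a branch of periodic solutions bifurcating from $F$ near $\tau_{0}$.

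The main obstacle is the loss of stability, if the statement is read to include it. That requires the crossing speed to be positive, i.e.\ $\mathcal H'(\xi_{0})>0$. I would argue this from minimality of $\tau_{0}$. For $\tau<\tau_{0}$ no roots lie in the closed right half-plane. If the crossing speed were negative, the pair $\lambda(\tau)$ would have $\operatorname{Re}\lambda(\tau)>0$ for $\tau$ slightly less than $\tau_{0}$, contradicting stability on $[0,\tau_{0})$. Hence the speed is positive and, by Lemma~\ref{lem:transversality}, $\mathcal H'(\xi_{0})>0$. So for $\tau$ slightly larger than $\tau_{0}$ the pair lies in the right half-plane and $F$ loses stability. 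The delicate point is justifying the uniform-boundedness and continuity of roots in $\tau$ near $\tau_{0}$, which I would cite from \cite{ruan2003} as in Lemma~\ref{lem:root-counting}.
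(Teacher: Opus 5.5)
Your proposal is correct and follows the paper's proof essentially step for step. You obtain Part~2 from Lemma~\ref{lem:root-counting}, stability on $[0,\tau_{0})$ from $\tau_{0}$ being the smallest delay with an imaginary-axis root plus the root-continuity argument, and the Hopf bifurcation from Lemma~\ref{lem:transversality} together with the nonresonance hypothesis; your extra argument that minimality of $\tau_{0}$ forces a rightward crossing ($\mathcal H'(\xi_{0})>0$) does not appear in this proof but matches the reasoning the paper gives at the start of Section~\ref{ssec:direction}.
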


\begin{proof}
	Case (2) is Lemma~\ref{lem:root-counting}.
	For case (1), the sequence $\tau_{0}^{(k)}(\xi)=\tau_{0}^{(0)}(\xi)+2k\pi/\sqrt{\xi}$ increases in $k$, so the critical delay $\tau_{0}$ is the smallest delay at which \eqref{eq:quasipolynomial} has a purely imaginary characteristic root.
	As in the proof of Lemma~\ref{lem:root-counting}, the number of characteristic roots with $\operatorname{Re}\lambda>0$ is zero at $\tau=0$ and can change only at such a crossing, so $F$ remains locally asymptotically stable on $[0,\tau_{0})$.
	At $\tau=\tau_{0}$ the simple pair $\pm i\omega_{0}$ crosses the imaginary axis transversally (Lemma~\ref{lem:transversality}), and the nonresonance hypothesis then yields a Hopf bifurcation \cite{hale1993,hassard1981}.
\end{proof}

\subsection{Direction and Stability of the Hopf Bifurcation}\label{ssec:direction}

At a Hopf point $(F,\tau_{0})$, the coefficient $c_1(0)$ determines the direction and orbital stability of the bifurcating branch.
Let $\lambda(\tau)$ be the critical characteristic root in original time.
After rescaling time so that the delay is one, the corresponding characteristic root is $\Lambda(\tau)=\tau\lambda(\tau)$, with $\Lambda(\tau_{0})=i\omega_{0}\tau_{0}$.
At $\tau=\tau_{0}$, the center-manifold equation is
\begin{equation}
	\label{eq:reduced}
	\frac{dz}{dt} = i\omega_{0}\tau_{0} z + g(z,\bar{z}),
\end{equation}
where
\[
	g(z, \bar{z}) = g_{20} \frac{z^2}{2} + g_{11} z \bar{z} + g_{02} \frac{\bar{z}^2}{2} + g_{21} \frac{z^2 \bar{z}}{2} + \cdots,
\]
with $g_{jk}$ denoting the normal-form coefficients at the Hopf point.
The coefficient $c_1(0)$ is given by
\begin{equation}\label{mukx}
	c_1(0)=\frac{i}{2 \omega_{0} \tau_{0}}\left(g_{20} g_{11}-2\left|g_{11}\right|^2-\frac{1}{3}\left|g_{02}\right|^2\right)+\frac{1}{2}g_{21}.
\end{equation}
Because $\tau_{0}$ is the first crossing and $F$ is stable for $\tau<\tau_{0}$, transversality forces the critical pair rightward, so $\operatorname{Re}\lambda'(\tau_{0})>0$.
Since $\operatorname{Re}\lambda(\tau_{0})=0$, it follows that $\operatorname{Re}\Lambda'(\tau_{0})=\tau_{0}\operatorname{Re}\lambda'(\tau_{0})>0$.
The unfolding coefficient
\[
	\mu_2=-\frac{\operatorname{Re}c_1(0)}{\operatorname{Re}\Lambda'(\tau_{0})}
\]
therefore has the sign of $-\operatorname{Re}c_1(0)$.
Thus, when $\operatorname{Re}c_1(0)\ne0$, the sign of $\operatorname{Re}c_1(0)$ determines both the direction and the orbital stability of the branch.

\begin{theorem}\label{thm:hopf}
	Let $(F,\tau_{0})$ be a Hopf point with $\operatorname{Re}c_1(0)\ne0$.
	If $\operatorname{Re}c_1(0)<0$, the Hopf bifurcation is supercritical: nearby periodic solutions exist for $\tau>\tau_{0}$ sufficiently close to $\tau_{0}$ and are orbitally asymptotically stable, meaning that nearby trajectories approach the periodic orbit up to a phase shift.
	If $\operatorname{Re}c_1(0)>0$, it is subcritical: nearby periodic solutions exist for $\tau<\tau_{0}$ sufficiently close to $\tau_{0}$ and are orbitally unstable, meaning that perturbations can move trajectories away from the periodic orbit.
\end{theorem}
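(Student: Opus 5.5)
The plan is to reduce the infinite-dimensional problem to a two-dimensional normal form on a parameter-dependent center manifold and then read off the direction and stability of the bifurcating orbits from $c_1(0)$, following Hassard--Kazarinoff--Wan \cite{hassard1981}.

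\textbf{Step 1: rescaling.} First rescale time by $t\mapsto t/\tau$ and set $\tau=\tau_{0}+\mu$. The placement becomes a retarded functional differential equation on $C([-1,0],\mathbb{R}^{3})$ with the fixed unit delay. Write it as
\[
\dot x(t)=L_\mu x_t+G(\mu,x_t),
\]
where $L_\mu$ is the linearisation at $F$ and $G$ collects the quadratic terms. For the family \eqref{eq:family} these come only from the bilinear product $-v_2V(t-\tau_1)P(t-\tau_2)$, so $G$ has no cubic part in the state.

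\textbf{Step 2: the critical spectrum.} At $\mu=0$, the Hopf-point assumptions say that $\pm i\omega_0\tau_0$ are simple and that no other characteristic root lies on the imaginary axis. Every remaining root has negative real part, because $\tau_0$ is the first crossing (Theorem~\ref{thm:delay-hopf}). Hence the center subspace is two-dimensional and the rest of the spectrum is hyperbolic and stable.

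\textbf{Step 3: reduction.} By the center-manifold theorem for RFDEs \cite{hale1993}, a locally invariant, locally attracting $C^{k}$ center manifold exists for $\mu$ near $0$. To make it parameter-dependent, append the trivial equation $\dot\mu=0$. On this manifold the flow reduces to
\[
\dot z=\Lambda(\tau_0+\mu)\,z+g(z,\bar z,\mu).
\]
At $\mu=0$ this is exactly \eqref{eq:reduced}.

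\textbf{Step 4: Poincaré normal form.} Apply near-identity changes of variables to remove all quadratic terms and the non-resonant cubic terms. This leaves
\[
\dot w=\Lambda(\tau)w+c_1(\mu)w|w|^2+O(|w|^4).
\]
The standard computation \cite{hassard1981} gives $c_1(0)$ by \eqref{mukx}. Its real part is invariant under these coordinate changes, so the sign in the theorem is intrinsic.

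\textbf{Step 5: polar coordinates and the bifurcating orbits.} Write $w=re^{i\theta}$. This gives
\[
\dot r=r\bigl(\operatorname{Re}\Lambda(\tau)+\operatorname{Re}c_1(\mu)r^2\bigr)+O(r^4),
\]
with $\operatorname{Re}\Lambda(\tau)=\operatorname{Re}\Lambda'(\tau_0)\,\mu+O(\mu^2)$. The preceding discussion established $\operatorname{Re}\Lambda'(\tau_0)>0$ (Lemma~\ref{lem:transversality} together with the first-crossing argument). The implicit function theorem applied to $\dot r/r=0$ then yields a unique small positive branch with
\[
r^2=\mu_2^{-1}\mu\,\bigl(1+O(\mu)\bigr)
\]
whenever $\operatorname{Re}c_1(0)\neq0$. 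This branch corresponds to a periodic orbit of the reduced equation, hence of the full DDE.

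The orbit exists exactly when $\mu/\mu_2>0$. So for $\operatorname{Re}c_1(0)<0$ it exists for $\tau>\tau_0$, and for $\operatorname{Re}c_1(0)>0$ it exists for $\tau<\tau_0$.

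\textbf{Step 6: orbital stability.} Within the center manifold, the nontrivial Floquet exponent of the orbit is
\[
\beta=2\operatorname{Re}c_1(0)\,r^2+O(r^4),
\]
obtained by differentiating the $r$-equation at the branch. It is negative in the supercritical case and positive in the subcritical case.

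\textbf{Main obstacle: lifting stability to the full phase space.} The hard part is transferring orbital stability from the planar reduced flow to the infinite-dimensional semiflow. I would handle it with the exponential attractivity of the center manifold, since the hyperbolic complement is strictly stable. All Floquet multipliers transverse to the manifold then lie inside the unit disk, so the sign of $\beta$ alone decides orbital asymptotic stability with asymptotic phase versus instability. This reproduces the Hassard--Kazarinoff--Wan conclusions, which I would cite for the functional-analytic details (adjoint bilinear form and projection) rather than rederive.
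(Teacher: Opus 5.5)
Your proposal is correct and takes the same route as the paper: the paper derives $\operatorname{Re}\Lambda'(\tau_{0})=\tau_{0}\operatorname{Re}\lambda'(\tau_{0})>0$ from the first-crossing argument and then invokes Hassard--Kazarinoff--Wan, where the sign of $\mu_2=-\operatorname{Re}c_1(0)/\operatorname{Re}\Lambda'(\tau_{0})$ gives the direction and $\beta_2=2\operatorname{Re}c_1(0)$ gives orbital stability. You unpack the machinery the paper only cites (center-manifold reduction, normal form, polar coordinates, and attractivity of the center manifold coming from the stable complementary spectrum) instead of treating it as a black box.
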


Appendix~\ref{app:cm} derives the normal-form coefficients for the perception-lag model.
Appendix~\ref{app:cm-visitor} records the matrix and quadratic-term replacements for the visitor-delayed model.

\subsection{Delay-Independent Stability of Two Feasible Placements}\label{ssec:stable-feasible}

Here \emph{delay-independent stability} means that $F$ is locally asymptotically stable for every $\tau\ge0$ and that no Hopf bifurcation occurs.

\begin{theorem}\label{thm:stable-feasible}
	For the pollution-delayed model $(0,\tau,\tau)$ and the instantaneous-deterrence model $(0,0,\tau)$, the positive equilibrium $F$ is delay-independently stable.
\end{theorem}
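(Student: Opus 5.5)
The plan is to reduce both placements to case~(2) of Theorem~\ref{thm:delay-hopf}, equivalently to Lemma~\ref{lem:root-counting}. This requires three checks for each of the rows $(0,0,\tau)$ and $(0,\tau,\tau)$ of Table~\ref{tab:coefficient-ledger}: that $a_2+b_2\neq0$, that $\mathcal P$ and $\mathcal Q$ have no common zero on the imaginary axis, and that $\Xi=0$. The first check is immediate from the table, since $a_2+b_2=p_2k_3M>0$ for both rows. For $\Xi=0$ I will not use sign conditions on the coefficients $p,q,n$ of \eqref{eq:cubic-coefficients}. For the row $(0,0,\tau)$ one finds $p=(p_2-M)^2+k_3^2-v_1^2$, which need not be positive, so Descartes-type arguments fail. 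Instead I will use that $\mathcal H(\omega^2)=|\mathcal P(i\omega)|^2-|\mathcal Q(i\omega)|^2$ and prove the strict inequality $|\mathcal P(i\omega)|>|\mathcal Q(i\omega)|$ for every $\omega>0$. This gives $\Xi=0$ and, at the same time, rules out common imaginary zeros.

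To make the modulus comparison tractable, I will recompute $\mathcal P$ and $\mathcal Q$ directly from the linearisation in factored form instead of using the expanded coefficients. I will use $v_2V_\infty p_1=Mp_2$ and $v_3k_1=dk_3$ with $d=M-v_1$. For the row $(0,0,\tau)$ this gives
\[
\mathcal P(\lambda)=(\lambda+k_3)\bigl[(\lambda+M)(\lambda+p_2)+Mp_2\bigr],\qquad
\mathcal Q(\lambda)=-(\lambda+p_2)(v_1\lambda+Mk_3).
\]
The analogous factorisation for the row $(0,\tau,\tau)$ has $\mathcal Q(\lambda)=\lambda\,(-v_1\lambda+p_2d-k_3M)$ together with a cubic $\mathcal P$ whose factors again involve $(\lambda+p_2)$ and $(\lambda+k_3)$. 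I will first check each factorisation against the table by expansion.

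The core step is to bound $|\mathcal Q(i\omega)|^2$ using $0<v_1<M$, which follows from Remark~\ref{rem:active}. For the row $(0,0,\tau)$ this yields
\[
|\mathcal Q(i\omega)|^2<(\omega^2+p_2^2)\,M^2(\omega^2+k_3^2),
\]
and the factor $\omega^2+k_3^2$ also appears in $|\mathcal P(i\omega)|^2$. It therefore remains to compare $(2Mp_2-\omega^2)^2+\omega^2(M+p_2)^2$ with $M^2(\omega^2+p_2^2)$.

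I expect this last comparison to be the main obstacle. The crude bound $|v_1\lambda+Mk_3|\le M|\lambda+k_3|$ discards too much: the resulting quartic $\omega^4-\omega^2p_2(2M-p_2)+3M^2p_2^2$ is not obviously positive when $p_2$ is large relative to $M$. My first attempt to fix this is to keep $v_1$ and write $M=v_1+d$ with $d>0$, so that $M$ is not crudely replaced. I will then expand $|\mathcal P|^2-|\mathcal Q|^2$ fully as a polynomial in $\xi=\omega^2$ and group its terms into manifestly nonnegative squares plus positive multiples of $d$. The cross term $\omega^2(M+p_2)^2$ contributes $2Mp_2\omega^2$, and this should absorb the bad $-4Mp_2\omega^2$ once the $k_3$-dependence is kept together rather than cancelled. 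If this grouping fails, my fallback is to expand $\mathcal H$ via \eqref{eq:cubic-coefficients} in the variables $(p_2,k_3,v_1,d)$. In those variables I will show that $q$, $n$, and $p$, or at least the combination $\min_{\xi>0}\mathcal H$, are positive by exhibiting a sum-of-positive-terms form. The row $(0,\tau,\tau)$ is handled in the same way. There $b_2=0$, so $\mathcal Q(i\omega)=i\omega(\cdots)$, and the comparison is again with factors of $|\mathcal P(i\omega)|$.

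Once $|\mathcal P(i\omega)|>|\mathcal Q(i\omega)|$ is established for all $\omega>0$ in both rows, Lemma~\ref{lem:root-counting} applies, and the conclusion follows from Theorem~\ref{thm:delay-hopf}(2) together with Proposition~\ref{prop:nondelayed-stability}.
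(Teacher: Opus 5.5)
Your strategy is sound, and for the instantaneous-deterrence row $(0,0,\tau)$ your ``crude bound'' is exactly the paper's argument. Writing $|\mathcal Q(i\omega)|^2=M^2(\omega^2+p_2^2)(\omega^2+k_3^2)-(M^2-v_1^2)\omega^2(\omega^2+p_2^2)$ is the paper's splitting $\mathcal H(M^2y)/M^6=H(y)+(1-r^2)(y^2+a^2y)$, written in dimensional form.

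\textbf{The gap.} You then judge this bound insufficient and never prove the decisive inequality. As written, the proposal contains no proof that $\mathcal H>0$ on $(0,\infty)$ for this row. Your diagnosis is wrong: with $\xi=\omega^2$,
\[
\xi^{2}-p_2(2M-p_2)\xi+3M^{2}p_2^{2}=(\xi-Mp_2)^{2}+p_2^{2}\xi+2M^{2}p_2^{2}>0,
\]
which is the paper's identity $(y-a)^2+2a^2+a^2y$. Large $p_2/M$ is actually the harmless case: for $p_2\ge 2M$ every coefficient is nonnegative. The net negative term $-2Mp_2\xi$ is absorbed by completing the square with $\xi^2$ and part of the constant, not by any $k_3$-dependence.

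\textbf{Your fallbacks.} They are therefore unnecessary, and the second one cannot work. Besides $p$, the coefficient $q=p_2\bigl[k_3^{2}(p_2-2M)+p_2(4M^{2}-v_1^{2})\bigr]$ is negative when $p_2<2M$ and $k_3$ is large. So no positive-coefficient argument exists for this row.

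\textbf{The pollution-delayed row $(0,\tau,\tau)$.} Here you also give no inequality, only the assertion that it goes ``the same way.'' This is where your route genuinely differs from the paper's. The paper computes $n=a_2^2$, $p=p_2^2+k_3^2+M^2-v_1^2$ and $q=p_2[p_2(k_3^2+v_1^2+2v_1d)+2Mk_3d]$, all positive, and reads off $\Xi=0$ from the signs alone.

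Your modulus comparison also works once you record $\mathcal P(\lambda)=(\lambda+k_3)(\lambda+M)(\lambda+p_2)$ and use $0<v_1<M$ and $0<d<M$. These give $|p_2d-k_3M|<M\max(p_2,k_3)$, and hence, for $\omega>0$,
\[
|\mathcal Q(i\omega)|^{2}=\omega^{2}\bigl[v_1^{2}\omega^{2}+(p_2d-k_3M)^{2}\bigr]<M^{2}\omega^{2}(\omega^{2}+p_2^{2}+k_3^{2})\le(\omega^{2}+M^{2})(\omega^{2}+p_2^{2})(\omega^{2}+k_3^{2})=|\mathcal P(i\omega)|^{2}.
\]
This avoids computing $q$ but needs the extra estimate on $|p_2d-k_3M|$. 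The paper's coefficient check is shorter here because $b_2=0$ makes every coefficient of $\mathcal H$ positive.

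With these two one-line estimates supplied, your argument is complete. As submitted, both decisive inequalities are missing.
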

\begin{proof}
	For both placements $n>0$, so \eqref{eq:n-sign} is inconclusive; we therefore verify the hypotheses $\Xi=0$ and $a_{2}+b_{2}\neq0$ of Lemma~\ref{lem:root-counting} directly.

	\emph{(a) Pollution-delayed.}
	Substituting the pollution-delayed row of Table~\ref{tab:coefficient-ledger} into \eqref{eq:cubic-coefficients}, using $b_{2}=0$ and $M^{2}-d^{2}=v_{1}^{2}+2v_{1}d$, gives
	\[
		n=a_{2}^{2}>0,\qquad
		p=p_{2}^{2}+k_{3}^{2}+M^{2}-v_{1}^{2}>0,\qquad
		q=p_{2}\Bigl[p_{2}\bigl(k_{3}^{2}+v_{1}^{2}+2v_{1}d\bigr)+2Mk_{3}d\Bigr]>0,
	\]
	where $p>0$ because $M>v_{1}$.
	Every coefficient of $\mathcal H$ is thus positive, so $\Xi=0$; moreover $a_{2}+b_{2}=a_{2}>0$.

	\emph{(b) Instantaneous-deterrence.}
	For the instantaneous-deterrence row of Table~\ref{tab:coefficient-ledger}, the coefficients $p,q$ of the auxiliary cubic are not sign-definite in dimensional form, so we introduce the dimensionless ratios
	\[
		a=\frac{p_{2}}{M},\qquad b=\frac{k_{3}}{M},\qquad r=\frac{v_{1}}{M}\in(0,1),\qquad \xi=M^{2}y,
	\]
	in which
	\[
		\begin{aligned}
			\frac{\mathcal H(M^{2}y)}{M^{6}}
			&=y^{3}+\bigl(a^{2}-2a+b^{2}+1-r^{2}\bigr)y^{2} \\
			&\quad+a\bigl[a(b^{2}+4-r^{2})-2b^{2}\bigr]y+3a^{2}b^{2} \\
			&=H(y)+(1-r^{2})\bigl(y^{2}+a^{2}y\bigr),
		\end{aligned}
	\]
	where
	\[
		H(y):=(b^{2}+y)\bigl(y^{2}+(a^{2}-2a)y+3a^{2}\bigr).
	\]
	For $y>0$, both factors of $H$ are positive, since
	\[
		y^{2}+(a^{2}-2a)y+3a^{2}=(y-a)^{2}+2a^{2}+a^{2}y>0.
	\]
	Because also $1-r^{2}>0$ and $y^{2}+a^{2}y>0$, we have $\mathcal H(\xi)>0$ for every $\xi>0$, hence $\Xi=0$; moreover $a_{2}+b_{2}=p_{2}k_{3}M>0$.

	In both cases Lemma~\ref{lem:root-counting} now gives the result.
\end{proof}

\subsection{The Perception-Lag Model}\label{ssec:perception}

The only feasible placement that admits delay-induced destabilisation is the perception-lag model
$(\tau_{1},\tau_{2},\tau_{3})=(0,\tau,0)$:
\begin{equation}\label{eq:perception}
	\begin{aligned}
		 & \dot V=v_{1}V(t)-v_{2}V(t)P(t-\tau)+v_{3}K(t), \\
		 & \dot P=p_{1}V(t)-p_{2}P(t),                    \\
		 & \dot K=k_{1}V(t)-k_{3}K(t).
	\end{aligned}
\end{equation}
By Corollary~\ref{cor:global-feasible}, every nonnegative history generates a unique global nonnegative solution.
\begin{theorem}\label{thm:perception-hopf}
	For every choice of positive model parameters:
	\begin{enumerate}
		\item The auxiliary cubic has a unique positive root $\xi_{0}$, which is simple and satisfies $\mathcal H^{\prime}(\xi_{0})>0$.
		      The critical delay is $\tau_{0}=\tau_{0}^{(0)}(\xi_{0})$, with frequency $\omega_{0}=\sqrt{\xi_{0}}$.
		\item At $\tau=\tau_{0}$, the characteristic equation has the purely imaginary roots $\pm i\omega_{0}$, while $F$ is locally asymptotically stable for $\tau\in[0,\tau_{0})$.
		\item This is the only characteristic-root pair on the imaginary axis at $\tau=\tau_{0}$, and it crosses transversally into the right half-plane.
		      Hence $(F,\tau_{0})$ is a Hopf point with a simple critical pair.
		\item The equilibrium $F$ is unstable for every $\tau>\tau_{0}$; in particular, it does not restabilise as the delay increases.
	\end{enumerate}
\end{theorem}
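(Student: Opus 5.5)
The plan is to substitute the perception-lag row of Table~\ref{tab:coefficient-ledger} into \eqref{eq:cubic-coefficients}, with $a_0=p_2+d+k_3$, $a_1=p_2(d+k_3)$, $a_2=0$, $b_0=0$, $b_1=p_2M$, $b_2=p_2k_3M$. This gives
\[
p=a_0^2-2a_1=p_2^2+(d+k_3)^2>0,\qquad q=a_1^2-b_1^2=p_2^2\bigl[(d+k_3)^2-M^2\bigr],\qquad n=-b_2^2<0 .
\]
For part~1, $p>0$ and $n<0$ give at most one sign change in the coefficient sequence $(1,p,q,n)$ whatever the sign of $q$: the patterns are $(+,+,+,-)$ and $(+,+,-,-)$. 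By Descartes' rule there is exactly one positive root $\xi_0$, counted with multiplicity. It is therefore simple.

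To get $\mathcal H'(\xi_0)>0$, I will use that $\mathcal H(0)=n<0$ and $\mathcal H\to+\infty$. Since $\xi_0$ is the only positive root, $\mathcal H<0$ on $(0,\xi_0)$ and $\mathcal H>0$ beyond it. Simplicity then forces $\mathcal H'(\xi_0)>0$, because a simple root cannot have vanishing derivative.

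The hypotheses of Proposition~\ref{prop:imaginary-roots} and Theorem~\ref{thm:delay-hopf} must also be checked. First, $a_2+b_2=p_2k_3M\ne0$. Second, $\mathcal P$ and $\mathcal Q$ have no common imaginary zero. Indeed $\mathcal Q(i\omega)=p_2k_3M+ip_2M\omega$ never vanishes for real $\omega$. Then Proposition~\ref{prop:imaginary-roots} produces the delays $\tau_0^{(k)}$, and $\tau_0=\tau_0^{(0)}(\xi_0)$ because $\xi_0$ is the only positive root.

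For parts~2--3, nonresonance at $\tau_0$ needs care. Any imaginary root $i\omega$ with $\omega>0$ must satisfy $\mathcal H(\omega^2)=0$, so $\omega=\omega_0$. Conjugates give $-i\omega_0$. The value $\lambda=0$ is excluded since $\mathcal D(0,\tau)=a_2+b_2\ne0$. Hence $\pm i\omega_0$ is the only pair on the axis, and it is simple by Lemma~\ref{lem:transversality}.

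Theorem~\ref{thm:delay-hopf}(1) then gives stability on $[0,\tau_0)$ and the Hopf point. The crossing is rightward because $\operatorname{sign}\operatorname{Re}\lambda'=\operatorname{sign}\mathcal H'(\xi_0)>0$.

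Part~4 is the main obstacle, since it is a global-in-$\tau$ statement. The approach is to track the number of roots in the right half-plane, $N(\tau)$. It changes only at the delays $\tau_0^{(k)}$, $k\ge0$. At each of these the only imaginary pair is $\pm i\omega_0$, which is simple. Lemma~\ref{lem:transversality} applies at every $\tau_0^{(k)}$, not just $k=0$, and its crossing speed always has the sign of $\mathcal H'(\xi_0)>0$.

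So every crossing adds two roots to the right half-plane and none ever leaves. Roots cannot escape to or enter from infinity at finite delay, as in the proof of Lemma~\ref{lem:root-counting}. Hence $N(\tau)=2(k+1)\ge2$ for $\tau\in(\tau_0^{(k)},\tau_0^{(k+1)}]$, and $F$ is unstable for all $\tau>\tau_0$.

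The one subtle point here is that $N$ is right-continuous at the crossing delays. This holds because the remaining roots have $\operatorname{Re}\lambda\ne0$ and vary continuously, so they cannot change side at $\tau_0^{(k)}$ itself.
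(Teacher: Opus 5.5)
Your proposal is correct and follows the paper's proof essentially step for step. The facts $p=p_2^2+(d+k_3)^2>0$ and $n=-b_2^2<0$, with Descartes' rule, give the unique simple root with $\mathcal H'(\xi_0)>0$; the hypotheses of Theorem~\ref{thm:delay-hopf} follow from $\mathcal Q(i\omega)\ne0$ and $a_2+b_2=p_2k_3M>0$; and part~4 holds because every later crossing occurs at some $\tau_0^{(k)}(\xi_0)$ and is rightward since $\mathcal H'(\xi_0)>0$. Your root-counting version of part~4 is just more explicit than the paper's, with one terminological slip: since $N$ is constant on $(\tau_0^{(k)},\tau_0^{(k+1)}]$, it is left-continuous rather than right-continuous at the crossing delays, which does not affect the conclusion.
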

\begin{proof}
	Linearising \eqref{eq:perception} at $F$ and using the identities $p_{1}v_{2}V_{\infty}=p_{2}M$ and $v_{3}k_{1}=d\,k_{3}$ from \eqref{eq:equilibrium-formulas} and \eqref{eq:M-d}, the characteristic equation factors as
	\[
		\begin{aligned}
			\mathcal P(\lambda)+\mathcal Q(\lambda)e^{-\lambda\tau}&=0, \\
			\mathcal P(\lambda)&=\lambda(\lambda+p_{2})(\lambda+d+k_{3}), \\
			\mathcal Q(\lambda)&=p_{2}M(\lambda+k_{3}),
		\end{aligned}
	\]
	which is the perception-lag row of Table~\ref{tab:coefficient-ledger}.

	(1) Since $b_{0}=a_{2}=0$ in this row,
	\[
		p=a_{0}^{2}-2a_{1}=p_{2}^{2}+(d+k_{3})^{2}>0,
		\qquad n=a_{2}^{2}-b_{2}^{2}=-b_{2}^{2}<0.
	\]
	Whatever the sign of $q$, the coefficient sequence $1,p,q,n$ of $\mathcal H$ has exactly one sign change.
	Descartes' rule of signs therefore gives $\Xi=1$, so the positive root $\xi_{0}$ is unique and simple.
	Because $\mathcal H(0)=n<0$, the polynomial changes sign from negative to positive at $\xi_{0}$; hence $\mathcal H^{\prime}(\xi_{0})>0$.

	(2)--(4) It remains to verify the hypotheses of Theorem~\ref{thm:delay-hopf} under the standing parameter assumptions.
	The only zero of $\mathcal Q(\lambda)=p_{2}M(\lambda+k_{3})$ is $-k_{3}<0$, so $\mathcal P$ and $\mathcal Q$ have no common zero on the imaginary axis.
	Moreover, $a_{2}+b_{2}=p_{2}k_{3}M>0$, so $\lambda=0$ is not a characteristic root.
	Proposition~\ref{prop:imaginary-roots} then gives the pair $\pm i\omega_{0}$ at $\tau=\tau_{0}$.
	Since every nonzero imaginary characteristic root yields a positive root of $\mathcal H$, $\Xi=1$ makes this the only characteristic-root pair on the imaginary axis.
	Lemma~\ref{lem:transversality} gives the rightward transversal crossing, and Theorem~\ref{thm:delay-hopf} establishes the Hopf point and stability on $[0,\tau_0)$.
	Because $\xi_0$ is the only positive root of $\mathcal H$, every subsequent imaginary-axis crossing occurs at one of the delays $\tau_0^{(k)}(\xi_0)$.
	The sign $\mathcal H'(\xi_0)>0$ makes every such crossing rightward, so no characteristic root crosses back into the left half-plane.
	Once the first crossing places characteristic roots in the right half-plane, later crossings can only add unstable roots; hence $F$ cannot restabilise.
\end{proof}

\subsection{The Visitor-Delayed Model}\label{ssec:hopf}

Among the four infeasible placements, we analyse $(\tau_{1},\tau_{2},\tau_{3})=(\tau,0,\tau)$, the visitor-delayed model:
\begin{equation}\label{eq:visitor-delayed}
	\begin{aligned}
		 & \dot V=v_{1}V(t-\tau)-v_{2}V(t-\tau)P(t)+v_{3}K(t), \\
		 & \dot P=p_{1}V(t)-p_{2}P(t),                          \\
		 & \dot K=k_{1}V(t-\tau)-k_{3}K(t).
	\end{aligned}
\end{equation}
This placement delays visitation-driven growth, reinvestment, and the visitor factor in the deterrence term, making it the closest to the visitor-delay structure of earlier delayed-tourism models \cite{russu2009,he2022,he2023}.
Although it is infeasible, local analysis near the interior equilibrium $F$ remains meaningful; Remark~\ref{rem:local-positivity} clarifies the scope of this interpretation.

\begin{proposition}\label{prop:visitor-hopf}
	For the visitor-delayed model, either $\Xi=0$, in which case $F$ is locally asymptotically stable for every $\tau\ge0$, or $\Xi=2$.
	In the latter case, if the transversality and nonresonance hypotheses of Theorem~\ref{thm:delay-hopf} hold at the critical delay $\tau_{0}$, then $F$ is locally asymptotically stable for $\tau\in[0,\tau_{0})$ and undergoes a Hopf bifurcation at $\tau=\tau_{0}$.
\end{proposition}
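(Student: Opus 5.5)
Our plan is to compute the auxiliary cubic for the visitor-delayed row of Table~\ref{tab:coefficient-ledger} and show that its number of positive roots $\Xi$ is even and at most $2$. The conclusions then follow from Lemma~\ref{lem:root-counting} and Theorem~\ref{thm:delay-hopf}. Since $b_{2}=0$, we have $n=a_{2}^{2}=(p_{2}k_{3}M)^{2}>0$, so \eqref{eq:n-sign} already makes $\Xi$ even. It remains only to exclude $\Xi=3$. For a cubic, that is automatic once $\Xi$ is even, since an even count is $0$ or $2$. So parity alone gives $\Xi\in\{0,2\}$ with no further computation.

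For completeness we would also record $p=a_{0}^{2}-2a_{1}-b_{0}^{2}=p_{2}^{2}+k_{3}^{2}-2p_{2}M-d^{2}$ and $q=a_{1}^{2}-2a_{0}a_{2}-b_{1}^{2}=p_{2}^{2}(M^{2}+k_{3}^{2}-d^{2})$. Here $q>0$ because $M>d$ (indeed $M-d=v_{1}>0$), which is consistent with Descartes' rule allowing either $0$ or $2$ sign changes depending on the sign of $p$. This shows the dichotomy is genuine. We would not need it for the proof.

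Next we verify the standing hypotheses of Lemma~\ref{lem:root-counting} and Theorem~\ref{thm:delay-hopf}. First, $a_{2}+b_{2}=p_{2}k_{3}M>0$. Second, $\mathcal P$ and $\mathcal Q$ have no common zero on the imaginary axis. Here $\mathcal Q(\lambda)=d\lambda(\lambda+p_{2})$ vanishes on the axis only at $\lambda=0$, and $\mathcal P(0)=a_{2}\neq0$. With these in hand, the case $\Xi=0$ is exactly Lemma~\ref{lem:root-counting}, giving local asymptotic stability for every $\tau\ge0$.

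In the case $\Xi=2$, the critical delay $\tau_{0}$ is well defined via Proposition~\ref{prop:imaginary-roots}, and it is positive. Under the assumed transversality ($\mathcal H'(\xi_{0})\ne0$) and nonresonance hypotheses, Theorem~\ref{thm:delay-hopf}(1) gives stability on $[0,\tau_{0})$ and a Hopf bifurcation at $\tau_{0}$. The only delicate point, and the main obstacle if one wanted more, is that with two positive roots one of them may have $\mathcal H'<0$. Also, the critical delays from the two roots could coincide, which would be resonance. This is precisely why these are kept as hypotheses rather than proved. A brief remark would note that, since $\tau_0$ is the first crossing, the attained root must in fact cross rightward, consistent with the sign discussion preceding Theorem~\ref{thm:hopf}.
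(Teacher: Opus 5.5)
Your argument is correct and is essentially the paper's proof: $b_2=0$ gives $n=a_2^2>0$, so $\Xi$ is even and therefore $\Xi\in\{0,2\}$ for a cubic; the zeros $0,-p_2$ of $\mathcal Q$ together with $a_2+b_2=p_2k_3M>0$ verify the standing hypotheses; and Lemma~\ref{lem:root-counting} and Theorem~\ref{thm:delay-hopf} then settle the two cases. You should drop or correct the optional aside, because it contains two errors: the correct coefficient is $q=p_2^2(M^2+k_3^2-d^2)-2p_2k_3^2M$ (you omitted the term $-2p_2k_3^2M$ coming from $2a_0a_2$), which is not sign-definite and is in fact negative for the Scenario~3 parameters; and Descartes' rule only bounds $\Xi$, so it cannot show that the case $\Xi=2$ actually occurs.
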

\begin{proof}
	Linearising \eqref{eq:visitor-delayed} at $F$ gives the visitor-delayed row of Table~\ref{tab:coefficient-ledger}; in particular,
	\[
		\mathcal Q(\lambda)=d\,\lambda(\lambda+p_{2}),
		\qquad d>0.
	\]
	Since $b_{2}=0$, we have $n=a_{2}^{2}>0$, so $\Xi$ is even by \eqref{eq:n-sign}.
	Because the cubic $\mathcal H$ has at most three positive roots, $\Xi\in\{0,2\}$.

	The zeros of $\mathcal Q$ are $0$ and $-p_{2}$, while $\mathcal P(0)=a_{2}>0$; hence $\mathcal P$ and $\mathcal Q$ have no common zero on the imaginary axis.
	Moreover, $\mathcal D(0,\tau)=a_{2}+b_{2}=p_{2}k_{3}M>0$, so $\lambda=0$ is never a characteristic root.
	The result now follows from Theorem~\ref{thm:delay-hopf}.
\end{proof}

Thus Hopf bifurcation is parameter-dependent in this model.
Since $n>0$, the case $\Xi=2$ requires $p<0$ or $q<0$ in \eqref{eq:cubic-coefficients}, and the signs of these coefficients depend on the model parameters.
Section~\ref{sec:numerics} presents a parameter set for which $\Xi=2$.

\begin{remark}\label{rem:local-positivity}
	Infeasibility means that some nonnegative histories leave the nonnegative orthant, not that all do.
	Because $F$ lies in the interior of the orthant, local asymptotic stability ensures that solutions starting sufficiently close to $F$ remain positive for $\tau<\tau_{0}$.
	For a supercritical bifurcation, the periodic orbits shrink to $F$ as $\tau\downarrow\tau_{0}$ and are therefore positive sufficiently close to onset.
	Outside this local regime, including $\tau>\tau_{0}$ in the subcritical case, the analysis gives no positivity guarantee; Section~\ref{sec:numerics} locates boundary contact numerically.
\end{remark}

\section{Numerical Results and Placement Comparisons}\label{sec:numerics}

Numerical computations illustrate both Hopf directions of the perception-lag model \eqref{eq:perception} and contrast it with the delay-independently stable pollution-delayed placement $(0,\tau,\tau)$.
The visitor-delayed comparison illustrates how infeasibility affects the interpretation of the model's local Hopf bifurcation.
Table~\ref{tab:positivity} summarises the simulation outcomes.

\subsection{Numerical Protocol and Scenarios}

\paragraph{Nondimensional form and admissible groups.}
Although the placement family \eqref{eq:family} carries seven dimensional parameters, its scaled dynamics depend on only three dimensionless groups and the rescaled delays $\sigma_i$.
Rescaling the states by their equilibrium values, $\hat V=V/V_{\infty}$, $\hat P=P/P_{\infty}$, and $\hat K=K/K_{\infty}$, and setting $s=p_{2}t$ and $\sigma_i=p_2\tau_i$, we use \eqref{eq:equilibrium-formulas} and Remark~\ref{rem:active} to reduce the family to
\begin{equation}\label{eq:nondim}
	\begin{aligned}
		\hat V'(s) & = \rho\,\hat V(s-\sigma_3)-\mu\,\hat V(s-\sigma_1)\,\hat P(s-\sigma_2)+(\mu-\rho)\,\hat K(s), \\
		\hat P'(s) & = \hat V(s)-\hat P(s),                                                                                      \\
		\hat K'(s) & = \kappa\bigl(\hat V(s-\sigma_3)-\hat K(s)\bigr).
	\end{aligned}
\end{equation}
The equilibrium is $(\hat V,\hat P,\hat K)=(1,1,1)$, and
\[
	\rho=\frac{v_{1}}{p_{2}},\qquad
	\mu=\frac{v_{2}P_{\infty}}{p_{2}},\qquad
	\kappa=\frac{k_{3}}{p_{2}}
\]
measure visitor growth, deterrence, and capital depreciation against natural pollution decay.
By Remark~\ref{rem:active}, the \emph{capital-feedback contribution} $\mu-\rho=v_{3}k_{1}/(k_{3}p_{2})$ is positive, so $\mu>\rho$ is a consequence of the model assumptions rather than an extra condition.
Conversely, every triple with $\rho,\kappa>0$ and $\mu>\rho$ arises from strictly positive parameters; for example, take
\[
	v_{1}=\rho,\qquad k_{3}=\kappa,\qquad k_{1}=\kappa(\mu-\rho),\qquad v_{2}=v_{3}=p_{1}=p_{2}=1.
\]
The admissible region is therefore
\begin{equation}\label{eq:admissible}
	\mathcal G=\{(\rho,\mu,\kappa):\rho>0,\ \kappa>0,\ \mu>\rho\}.
\end{equation}

The three scenarios in Table~\ref{tab:scenarios} differ only in $(v_{1},v_{3})$.
They isolate the qualitative regimes of interest rather than represent a particular site.
Scenarios~1 and 2 realise the two Hopf directions of the perception-lag model, while Scenario~3 supplies the local Hopf and infeasibility comparison for the visitor-delayed model.
Relative to Scenario~1, Scenario~2 increases both the intrinsic visitor-growth group $\rho$ and the capital-feedback contribution $\mu-\rho$.
All integrations begin from the constant history
\begin{equation}\label{eq:history}
	\phi(t)=F+\varepsilon\,(1,-1,1),\qquad t\in[-\tau,0].
\end{equation}

We use the method of steps with SciPy's adaptive DOP853 integrator \cite{virtanen2020}, restarting at each multiple of the delay so that propagated history discontinuities coincide with step-interval boundaries.
The delayed state is evaluated from the solver's dense output on the preceding delay interval.
The nominal tolerances are $\mathtt{rtol}=10^{-9}$ and $\mathtt{atol}=10^{-11}$, with maximum internal step $0.25$; the plotted and diagnostic output is sampled every $0.01$, independently of the adaptive integration steps.
Repeating the principal checks with $\mathtt{rtol}=10^{-11}$, $\mathtt{atol}=10^{-13}$, and maximum step $0.125$ leaves the reported near-onset period and amplitude unchanged to the displayed precision, reproduces the Scenario~2 trajectory through $t=650$, and changes the visitor-delayed orthant-exit time by less than $10^{-9}$.

To accommodate slow transients near $\tau_{0}$, runs measuring settled amplitudes or convergence continue to $T=6000$--$16000$, with diagnostics sampled over the final $15\%$.
Above-critical Scenario~2 diagnostics use the finite horizons stated below; runs used to locate loss of feasibility continue until the first boundary crossing.
For trajectories leaving the nonnegative orthant, we report
\[
	t_{\mathrm{exit}}=\inf\{t\ge0:(V,P,K)(t)\notin[0,\infty)^{3}\}.
\]

As an independent check, DDE-BifTool v3.1.1 \cite{sieber2014}, used with numerical derivatives, recovers the perception-lag Hopf points of Scenarios~1 and 2 and matches the reported $(\omega_{0},\tau_{0})$.
Its first Lyapunov coefficient $L_1$ is negative in Scenario~1 and equals $+5.98\times10^{-4}$ in Scenario~2, agreeing in sign with $\operatorname{Re}c_1(0)$ in Table~\ref{tab:roots}.
Only the signs are compared because the normalisation of $L_1$ differs from that of $c_1(0)$ in \eqref{mukx}.

\begin{table}[htbp]
	\centering
	\footnotesize
	\begin{tabular}{@{}lccc@{}}
		\toprule
		Scenario & $(v_{1},v_{3})$ & $(\rho,\mu,\kappa)$ & $F=(V_{\infty},P_{\infty},K_{\infty})$ \\
		\midrule
		1 & $(2,\,1.5)$ & $(10,\,12.1429,\,3.5)$ & $(0.7359,\,8.0952,\,0.2103)$ \\
		2 & $(5,\,25)$ & $(25,\,60.7143,\,3.5)$ & $(3.6797,\,40.4762,\,1.0513)$ \\
		3 & $(2,\,1.1)$ & $(10,\,11.5714,\,3.5)$ & $(0.7013,\,7.7143,\,0.2004)$ \\
		\bottomrule
	\end{tabular}
	\caption{Parameter scenarios, dimensionless groups, and positive equilibria.
	The scenarios share $v_{2}=0.3$, $p_{1}=2.2$, $p_{2}=0.2$, $k_{1}=0.2$, and $k_{3}=0.7$ and differ only in $(v_{1},v_{3})$.}
	\label{tab:scenarios}
\end{table}

\begin{table}[htbp]
	\centering
	\footnotesize
	\begin{tabular}{@{}lcccccc@{}}
		\toprule
		Scenario & $\omega_{0}$ & $\tau_{0}$ & $\operatorname{Re}\lambda'(\tau_{0})$ & $\operatorname{Re}c_1(0)$ & $\mu_2$ & $|\mathcal D(i\omega_{0},\tau_{0})|$ \\
		\midrule
		1 & $0.5724$ & $0.9647$ & $+0.160$ & $-0.0843$ & $+0.547$ & $4.2\times10^{-17}$ \\
		2 & $0.4958$ & $1.8889$ & $+0.121$ & $+0.0012$ & $-0.00543$ & $8.1\times10^{-16}$ \\
		\bottomrule
	\end{tabular}
	\caption{Computed Hopf quantities for the perception-lag model (Theorem~\ref{thm:perception-hopf}) in Scenarios~1 and 2.
	The reported $\operatorname{Re}\lambda'(\tau_{0})>0$ confirms the transversal crossing guaranteed by the theorem, and $\mu_2=-\operatorname{Re}c_1(0)/(\tau_{0}\operatorname{Re}\lambda'(\tau_{0}))$.
	The final column is the residual of the characteristic function $\mathcal D$ of \eqref{eq:quasipolynomial} at the reported Hopf point.}
	\label{tab:roots}
\end{table}

\begin{table}[htbp]
	\centering
	\footnotesize
	\begin{tabular}{@{}llcc>{\raggedright\arraybackslash}p{5.0cm}@{}}
		\toprule
		Placement & Scenario & $\tau$ & $\tau/\tau_{0}$ & Outcome \\
		\midrule
		Perception-lag & 1 & $0.90$ & $0.93$ & converges to $F$ \\
		Perception-lag & 1 & $1.05$ & $1.09$ & stable positive periodic orbit \\
		Pollution-delayed & 1 & $1.05$ & --- & converges to $F$ \\
		Perception-lag & 2 & $1.7944$ & $0.95$ & converges to $F$ \\
		Perception-lag & 2 & $1.9266$ & $1.02$ & growing oscillations; nonnegative through $t=650$ \\
		Visitor-delayed & 3 & $3.4843$ & $0.85$ & converges to $F$ \\
		Visitor-delayed & 3 & $4.1812$ & $1.02$ & exits the nonnegative orthant at $t_{\mathrm{exit}}\approx342$ \\
		\bottomrule
	\end{tabular}
	\caption{Simulation outcomes across the three delay placements.
	All runs use the history \eqref{eq:history} with $\varepsilon=0.10$, except the Scenario~2 above-critical run ($\varepsilon=0.05$).
	The visitor-delayed critical delay in Scenario~3 is $\tau_{0}=4.0992$; those in Scenarios~1 and~2 are reported in Table~\ref{tab:roots}.
	The dash marks the pollution-delayed placement, which has no critical delay (Theorem~\ref{thm:stable-feasible}).}
	\label{tab:positivity}
\end{table}

\subsection{Perception-Lag Scenario 1: Supercritical Hopf}\label{ssec:perception-supercritical}

For Scenario~1, $\tau_{0}=0.9647$ and $\operatorname{Re}c_1(0)=-0.0843$, so the Hopf bifurcation is supercritical.
Orbitally asymptotically stable periodic solutions emerge for $\tau>\tau_{0}$, with linearised period $2\pi/\omega_{0}=10.98$ at onset.

Figure~\ref{fig:perception} contrasts the time series below and above the critical delay.
The perturbation decays at $\tau=0.90$ and approaches a positive periodic orbit at $\tau=1.05$.
Using the Scenario~1 parameters and the same delay, the pollution-delayed placement $(0,\tau,\tau)$ converges back to $F$ (Table~\ref{tab:positivity}, Theorem~\ref{thm:stable-feasible}).
This contrast shows that the oscillation is due to delay placement rather than delay magnitude.

\begin{figure}[htbp]
	\centering
	\begin{subfigure}[b]{0.495\textwidth}
		\centering
		\includegraphics[width=\linewidth]{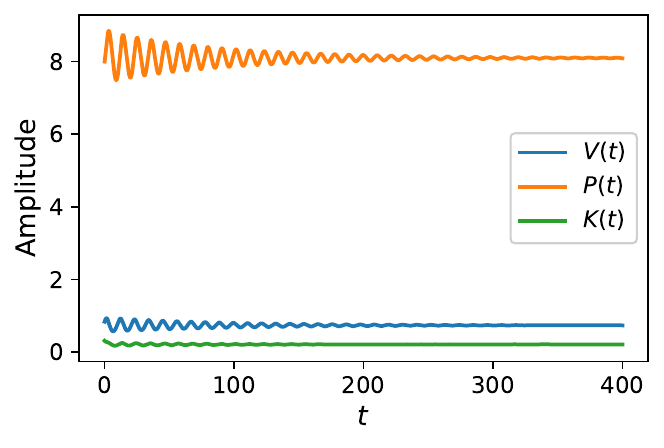}
		\caption{$\tau=0.90$, $\varepsilon=0.10$}
	\end{subfigure}
	\hfill
	\begin{subfigure}[b]{0.495\textwidth}
		\centering
		\includegraphics[width=\linewidth]{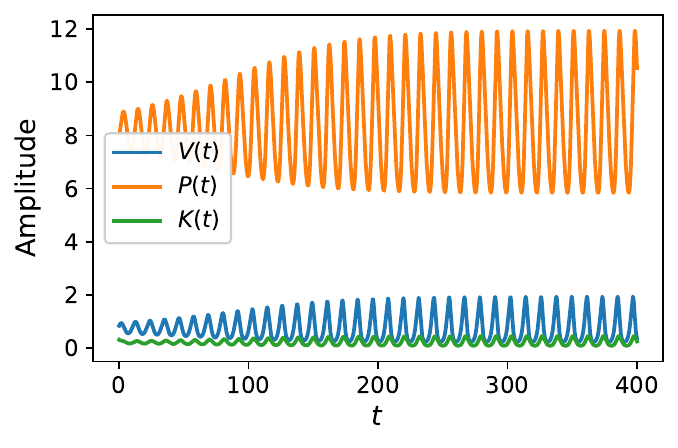}
		\caption{$\tau=1.05$, $\varepsilon=0.10$}
	\end{subfigure}
	\caption{Scenario~1 for the perception-lag model \eqref{eq:perception}.
		At $\tau=0.90<\tau_{0}$, the perturbation decays; at $\tau=1.05>\tau_{0}$, the solution approaches an orbitally asymptotically stable positive periodic orbit.}
	\label{fig:perception}
\end{figure}

Figure~\ref{fig:perception-amplitude} shows the post-transient amplitude $\max V-\min V$ as a function of $\tau/\tau_{0}$.
The amplitude grows continuously from zero with the $\sqrt{\tau-\tau_{0}}$ scaling expected for a supercritical Hopf branch.
Relative to the amplitude at $\tau/\tau_{0}=1.005$, the measured ratios at $1.02$ and $1.05$ are $2.02$ and $3.27$, compared with the predicted values $\sqrt{4}$ and $\sqrt{10}$.
At $\tau/\tau_{0}=1.005$, the measured period is $11.02$, close to the linearised value $10.98$.

\begin{figure}[htbp]
	\centering
	\includegraphics[width=0.64\linewidth]{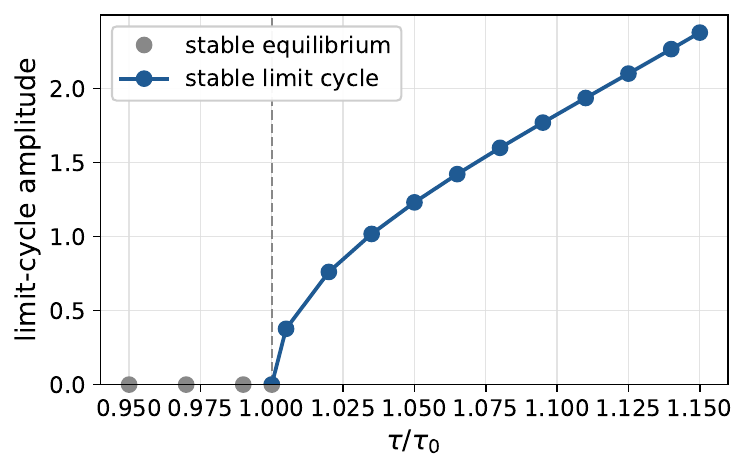}
	\caption{Visitor amplitude $\max V-\min V$, measured over the final $15\%$ of each simulation, for Scenario~1 of the perception-lag model \eqref{eq:perception}.
	The amplitude grows continuously from zero as $\tau/\tau_{0}$ increases above one.}
	\label{fig:perception-amplitude}
\end{figure}

\Needspace{8\baselineskip}
\subsection{Perception-Lag Scenario 2: Subcritical Hopf}

For Scenario~2, $\tau_{0}=1.8889$ and $\operatorname{Re}c_1(0)=+0.0012$, so the Hopf bifurcation is subcritical.
The reported value of $\operatorname{Re}c_1(0)$ is small under the normalisation of Appendix~\ref{app:cm}.
Its magnitude depends on the normal-form normalisation, whereas its sign---and hence the bifurcation classification---does not.
The bifurcating periodic branch is orbitally unstable and lies on the locally stable equilibrium side $\tau<\tau_{0}$.
Continuation shows that this branch need not remain small away from onset (Figure~\ref{fig:perception-sub-branch}).

\paragraph{Perturbation threshold below onset.}
Along the perturbation direction $(1,-1,1)$ in \eqref{eq:history}, the tested histories bracket a finite distance from $F$ across which the outcome changes from return to departure.
This estimated basin boundary is consistent with the unstable periodic branch.
At $\tau=0.98\tau_{0}$, the run with $\varepsilon=1.5$ converges to $F$, while the run with $\varepsilon=2.0$ leaves its neighbourhood.
At $\tau=0.995\tau_{0}$, the corresponding runs are $\varepsilon=0.5$ and $\varepsilon=1.0$.
The normal form predicts that the threshold decreases in proportion to $\sqrt{\tau_{0}-\tau}$, giving a factor of $\sqrt{0.02/0.005}=2$ between these delays, consistent with the observed brackets.

\paragraph{Continued unstable branch.}
Direct continuation with DDE-BifTool confirms that the periodic branch extends into $\tau<\tau_{0}$ and is unstable.
Every computed nontrivial orbit on $0.95\tau_{0}\lesssim\tau<\tau_{0}$ has one nontrivial Floquet multiplier outside the unit circle, indicating one unstable direction (Figure~\ref{fig:perception-sub-branch}).
Although the branch reaches a large visitor amplitude away from onset, with $\max V-\min V\approx23$ at $0.95\tau_{0}$, the continued orbits remain strictly inside the nonnegative orthant over the computed interval.
Their minimum values are $\min V\approx0.65$, $\min P\approx24.5$, and $\min K\approx0.23$, consistent with the forward invariance of the perception-lag model.

\paragraph{Departure above onset.}
None of the simulated trajectories settled onto a small periodic orbit over the reported finite horizons.
The first time at which $|V-V_{\infty}|>1$ is $t=345$ for $1.02\tau_{0}$ and $t=1297$ for $1.005\tau_{0}$, consistent with increasingly slow departure as $\tau\downarrow\tau_{0}$.
The departing trajectories develop rapidly growing oscillations; the tighter-tolerance adaptive run reproduces this growth through $t=650$ and gives the same maximum visitor value, $12.456$, to three decimal places.
Finite-time blow-up is excluded by Lemma~\ref{lem:global-existence}, but this does not imply boundedness as $t\to\infty$ or convergence to an attractor.
The long-time behaviour, including possible convergence to a large-amplitude periodic orbit, therefore remains unresolved.

\begin{figure}[htbp]
	\centering
	\includegraphics[width=0.60\linewidth]{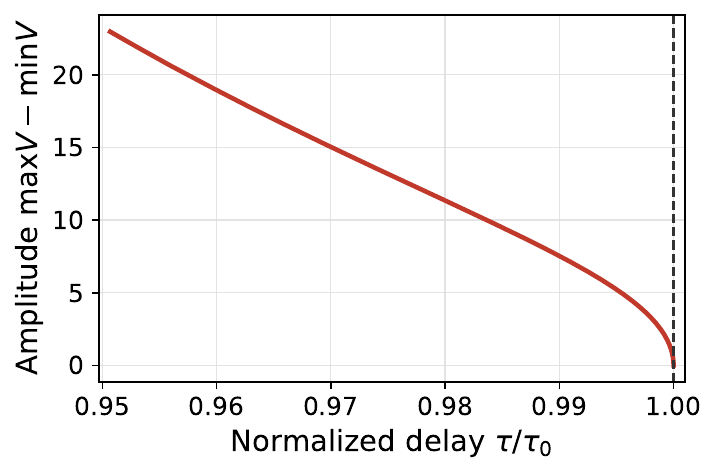}
	\caption{Scenario~2 unstable periodic branch continued from the subcritical Hopf point with DDE-BifTool.
	The amplitude $\max V-\min V$ tends to zero as $\tau\uparrow\tau_{0}$, and all computed nontrivial branch points have one unstable Floquet multiplier.}
	\label{fig:perception-sub-branch}
\end{figure}

\begin{figure}[htbp]
	\centering
	\begin{subfigure}[b]{0.495\textwidth}
		\centering
		\includegraphics[width=\linewidth]{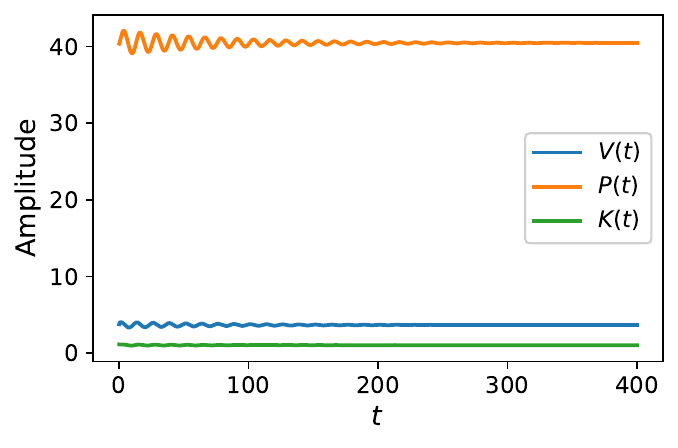}
		\caption{$\tau=0.95\tau_{0}=1.7944$, $\varepsilon=0.10$}
	\end{subfigure}
	\hfill
	\begin{subfigure}[b]{0.495\textwidth}
		\centering
		\includegraphics[width=\linewidth]{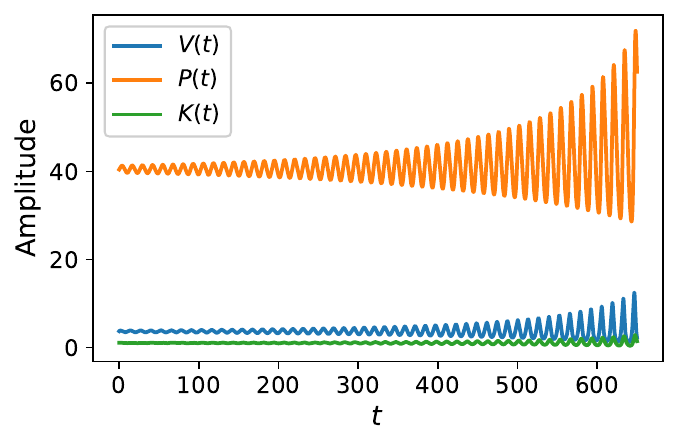}
		\caption{$\tau=1.02\tau_{0}=1.9266$, $\varepsilon=0.05$}
	\end{subfigure}
	\caption{Scenario~2 for the perception-lag model \eqref{eq:perception}.
	At $\tau=0.95\tau_{0}$, the perturbation decays; at $\tau=1.02\tau_{0}$, the oscillatory trajectory grows away from the equilibrium while remaining nonnegative through the plotted horizon $t=650$.}
	\label{fig:perception-sub}
\end{figure}

\FloatBarrier
\subsection{Parameter Dependence of the Perception-Lag Hopf Direction}\label{ssec:sensitivity}

Figure~\ref{fig:perception-sensitivity} maps the sign of $\operatorname{Re}c_1(0)$ on $120\times120$ grids over $\rho\in[5,80]$ and $\mu\in[6,200]$ at $\kappa=1.75$, $3.5$, and $7$.
These slices halve, retain, and double the Scenario~1--2 value $\kappa=3.5$.
Points with $\mu\le\rho$ are inadmissible and shown by the hatched wedge.
Because Theorem~\ref{thm:perception-hopf} guarantees a Hopf point at every admissible parameter, the map classifies direction rather than existence.

At $\kappa=1.75$, all sampled admissible grid points in the displayed window are supercritical, and no zero contour is detected.
At $\kappa=3.5$, the numerically computed zero contour separating the supercritical and subcritical regions rises from $\mu\approx39$ near $\rho\approx13$ to $\mu\approx114$ at $\rho=80$.
At $\kappa=7$, the contour shifts downward and the displayed subcritical region expands.
Scenario~1, at $(10,12.14)$, lies in the supercritical region; Scenario~2, at $(25,60.71)$, lies in the subcritical region.

Throughout the plotted subcritical region, $\operatorname{Re}c_1(0)\lesssim0.002$.
This small magnitude motivates the independent DDE-BifTool sign check and the qualitative perturbation-threshold consistency check in Scenario~2.
The Scenario~2 bifurcation direction is not close to a sign reversal: along $\rho=25$, the boundary occurs near $\mu\approx47$, whereas $\mu=60.71$ lies near the maximum of $\operatorname{Re}c_1(0)$ on that line.

Because the sign of $\operatorname{Re}c_1(0)$ is determined by $(\rho,\mu,\kappa)$, the supercritical--subcritical distinction is unchanged when all seven dimensional coefficients are multiplied by the same positive constant.
The three slices show that bifurcation direction depends materially on $\kappa$; they do not constitute a complete three-dimensional classification.

\begin{figure}[htbp]
	\centering
	\includegraphics[width=\linewidth]{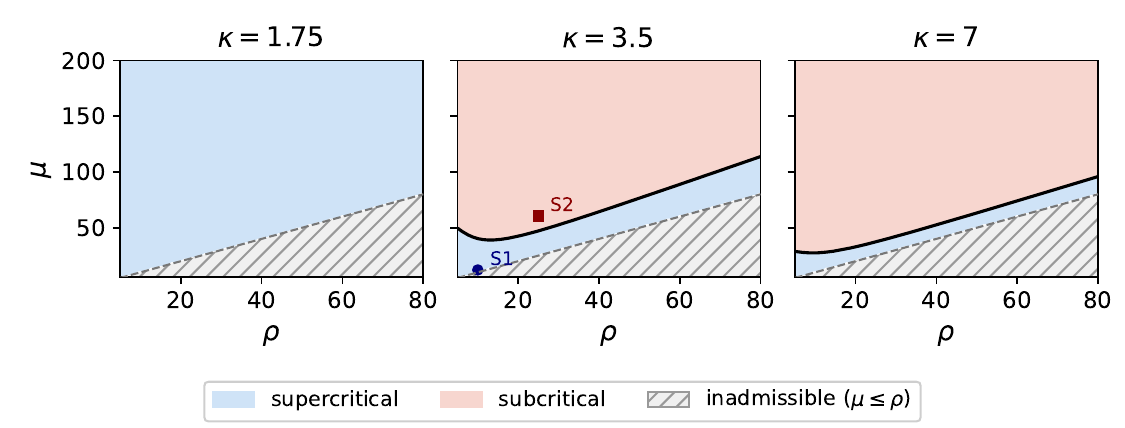}
	\caption{Sign of $\operatorname{Re}c_1(0)$ for the perception-lag model over the $(\rho,\mu)$ plane at the three $\kappa$ values shown.
	The hatched wedge $\mu\le\rho$ lies outside the admissible region $\mathcal G$; the Scenario~1 (S1) and Scenario~2 (S2) markers appear on their $\kappa=3.5$ slice.}
	\label{fig:perception-sensitivity}
\end{figure}

\FloatBarrier
\subsection{The Visitor-Delayed Comparison}

The final comparison uses the visitor-delayed model, whose nonnegative orthant is not forward invariant.
Its bifurcating orbit may therefore lose positivity beyond a neighbourhood of the equilibrium (Proposition~\ref{prop:visitor-hopf}, Remark~\ref{rem:local-positivity}).
For Scenario~3, the critical delay is $\tau_{0}=4.0992$.
The auxiliary cubic has two positive roots; the first gives $\mathcal H'(\xi_0)=0.1597$ and $\operatorname{Re}\lambda'(\tau_0)=0.0204>0$, while the other first reaches the imaginary axis at $\tau=6.4817$.
Thus the critical pair is simple, transversal, and nonresonant; Appendix~\ref{app:cm-visitor} gives $\operatorname{Re}c_1(0)=0.695$, confirming a subcritical Hopf bifurcation.
Figure~\ref{fig:visitor-delayed-comparison} shows decay below this threshold and exit from the nonnegative orthant for $\tau=4.1812$, at $t_{\mathrm{exit}}=341.635$; the tighter-tolerance run agrees to the reported precision.

The same qualitative contrast appears in supercritical and subcritical examples, although the subcritical examples use different parameter scenarios.
At the Scenario~1 parameter set, both placements bifurcate supercritically: the perception-lag periodic orbit remains positive over the tested range through $1.15\,\tau_{0}$, whereas the visitor-delayed periodic orbit, with $\tau_{0}=3.1380$, reaches the boundary of the nonnegative orthant near $1.07\,\tau_{0}$, as located by a delay sweep in the reproducibility scripts.
For the subcritical examples---Scenario~2 for the perception-lag model and Scenario~3 for the visitor-delayed model---the perception-lag trajectory remains positive through its reported finite horizon as it grows away from the equilibrium, whereas the visitor-delayed trajectory exits the nonnegative orthant.

\begin{figure}[htbp]
	\centering
	\begin{subfigure}[b]{0.495\textwidth}
		\centering
		\includegraphics[width=\linewidth]{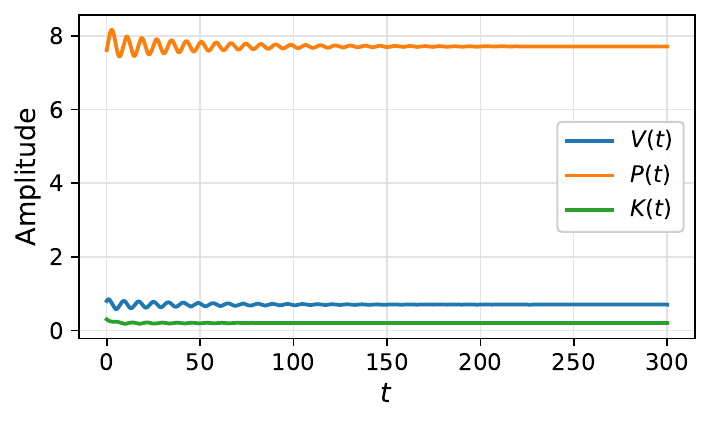}
		\caption{$\tau=3.4843$, $\varepsilon=0.10$}
	\end{subfigure}
	\hfill
	\begin{subfigure}[b]{0.495\textwidth}
		\centering
		\includegraphics[width=\linewidth]{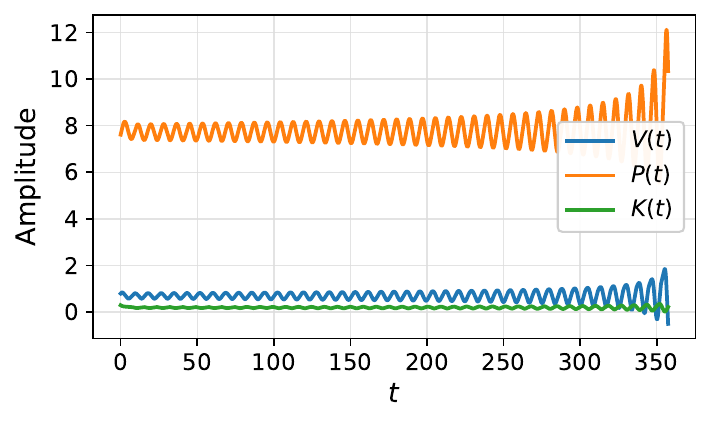}
		\caption{$\tau=4.1812$, $\varepsilon=0.10$}
	\end{subfigure}
	\caption{Visitor-delayed subcritical comparison (Scenario~3) for \eqref{eq:visitor-delayed}.
	At $\tau=0.85\tau_{0}$, the perturbation decays; at $\tau=1.02\tau_{0}$, the trajectory exits the nonnegative orthant at $t_{\mathrm{exit}}=341.635$, stable under the tighter adaptive tolerances.}
	\label{fig:visitor-delayed-comparison}
\end{figure}

\FloatBarrier
\section{Conclusion}\label{sec:conclusion}

We classified the feasibility of all eight common-delay placements and the local stability of the positive equilibrium in the four feasible cases.
For all positive parameters and every $\tau>0$, feasibility holds exactly when the visitor factor in the deterrence term is current, $\tau_{1}=0$.
Every nonnegative history for these four placements generates a unique global nonnegative solution.
Three have a delay-independently stable positive equilibrium; in the perception-lag model, the equilibrium instead loses stability at a finite critical delay for every set of positive parameters and never restabilises as the delay increases.

The center-manifold reduction and numerical results exhibit both bifurcation directions: Scenario~1 has an orbitally asymptotically stable supercritical branch, whereas Scenario~2 has an orbitally unstable subcritical branch.
At the lower $\kappa$ slice, all sampled admissible points in the displayed window are supercritical; the baseline and higher slices contain sampled regions of both directions.
Thus delay placement determines not only feasibility across the eight variants but also whether increasing the delay can destabilise the positive equilibrium among the feasible variants.

The visitor-delayed model $(\tau,0,\tau)$, closest to earlier delayed-tourism models, is infeasible even though it can undergo a local Hopf bifurcation.
Its nonnegative interpretation is therefore guaranteed only locally near the positive equilibrium; particular solutions may remain positive farther away, but forward invariance is lost.
This distinction shows why feasibility must be established before a local bifurcation is given a physical interpretation.
The numerical scenarios illustrate qualitative regimes rather than a calibrated site.

Beyond the specific model, a single structural choice---the slot a delay occupies in a feedback loop---simultaneously governs two properties usually studied separately: whether the nonnegative cone stays invariant and whether the delay can destabilise the equilibrium at all.
When it does, the direction of the onset is fixed not by the placement alone but jointly with the dimensionless parameter groups.

The main analytical priorities are to continue the periodic branches and locate possible folds or termination mechanisms, estimate basin boundaries in the subcritical case, classify the Hopf direction throughout the full three-dimensional group space, and analyse independent perception and response delays in the feasible multi-delay family.
Applied extensions could replace the linear pollution law with calibrated nonlinear recovery and make investment or environmental protection endogenous \cite{cazzavillan2001,menuet2024}, allowing perception, policy, and investment delays to be studied in sustainable or intergenerational development paths \cite{becker1982,martinet2022}.

\subsection*{Code availability}
The supplementary archive contains the Python scripts used to compute the Hopf quantities, run the numerical integrations, generate the figures, and reproduce the tabulated values.
The accompanying README lists the required software, execution commands, and the script associated with each result.
The command \texttt{python3~code/reproduce\_all.py} regenerates the Python-based results, including the period, perturbation-threshold, and departure-time diagnostics reported in Section~\ref{sec:numerics}.
Wolfram and DDE-BifTool implementations provide independent checks of the characteristic roots and the Hopf normal-form coefficients.
The archive is included with the submission and released under the MIT License.
A versioned public copy will be deposited on Zenodo upon acceptance, and its DOI will be added to the final article.

\subsection*{Declarations}
\textbf{Funding.} This research did not receive any specific grant from funding agencies in the public, commercial, or not-for-profit sectors.

\noindent\textbf{Competing interests.} The authors declare that they have no known competing financial interests or personal relationships that could have appeared to influence the work reported in this paper.

\Needspace{5\baselineskip}
\noindent\textbf{CRediT authorship contribution statement.} \textbf{Taylan Şengül:} Conceptualization, Methodology, Supervision, Writing -- review \& editing. \textbf{Bünyamin Kurtkaya:} Formal analysis, Software, Writing -- original draft.

\noindent\textbf{Data availability.} No external datasets were used; the numerical parameter assignments and histories are reported in the manuscript, and the reproduction scripts are included in the supplementary archive.

\appendix
\section{Center-Manifold Reduction of the Hopf Bifurcation}\label{app:cm}
\subsection{Functional Differential Equation Form}
The reduction follows the center-manifold procedure of Hassard, Kazarinoff, and Wan \cite{hassard1981}.
The calculation first identifies the current and delayed linear matrices and the quadratic nonlinearity, and then constructs the critical eigenvectors and their normalisation.
The equations for $W_{20}$ and $W_{11}$ provide the second-order center-manifold terms needed to evaluate $g_{21}$ and hence $c_1(0)$.
The routine \path{code/general_cm.py} implements these steps for both analysed models and reproduces the coefficients reported in Section~\ref{sec:numerics}.
For the perception-lag model \eqref{eq:perception}, set
\begin{equation}\label{eq:pcm-shift}
	x(t)=\bigl(x_1(t),x_2(t),x_3(t)\bigr)^{T}
	=\bigl(V(t)-V_{\infty},P(t)-P_{\infty},K(t)-K_{\infty}\bigr)^{T}.
\end{equation}
The shifted perception-lag model is
\begin{equation}\label{eq:pcm-shifted-system}
	\begin{aligned}
		\dot x_1(t) & =\left(v_1-v_2P_{\infty}\right)x_1(t)-v_2V_{\infty}x_2(t-\tau)+v_3x_3(t)-v_2x_1(t)x_2(t-\tau), \\
		\dot x_2(t) & =p_1x_1(t)-p_2x_2(t), \\
		\dot x_3(t) & =k_1x_1(t)-k_3x_3(t).
	\end{aligned}
\end{equation}
Let $s$ denote original time and set $t=s/\tau$, so that the delay in the new time variable is one.
Relabel the rescaled state as $x(t)$ and introduce the unfolding parameter
\begin{equation}\label{eq:pcm-rescaling}
	\nu=\tau-\tau_0.
\end{equation}
Then the delay interval is $[-1,0]$, and \eqref{eq:pcm-shifted-system} becomes
\begin{equation}\label{eq:pcm-rescaled-system}
	\begin{aligned}
		\dot x_1(t) & =(\tau_0+\nu)\left[\left(v_1-v_2P_{\infty}\right)x_1(t)-v_2V_{\infty}x_2(t-1)+v_3x_3(t)-v_2x_1(t)x_2(t-1)\right], \\
		\dot x_2(t) & =(\tau_0+\nu)\left[p_1x_1(t)-p_2x_2(t)\right], \\
		\dot x_3(t) & =(\tau_0+\nu)\left[k_1x_1(t)-k_3x_3(t)\right].
	\end{aligned}
\end{equation}
Under this rescaling, an original-time characteristic root $\lambda(\tau)$ becomes $\Lambda(\tau)=\tau\lambda(\tau)$, so $\operatorname{Re}\Lambda'(\tau_0)=\tau_0\operatorname{Re}\lambda'(\tau_0)$ because $\operatorname{Re}\lambda(\tau_0)=0$.
Let $\mathcal X=C([-1,0],\mathbb C^3)$ denote the complexification of the real phase space, and set $x_t(\theta)=x(t+\theta)$.
The linear part is $L_{\nu}\phi=\int_{-1}^{0}d\eta(\theta,\nu)\phi(\theta)$, where
\begin{equation}\label{eq:pcm-B-C}
	B=\begin{pmatrix}
		v_1-v_2P_\infty & 0 & v_3\\
		p_1&-p_2&0\\
		k_1&0&-k_3
	\end{pmatrix},
	\qquad
	C=\begin{pmatrix}
		0&-v_2V_\infty&0\\
		0&0&0\\
		0&0&0
	\end{pmatrix},
\end{equation}
and the Riesz measure is
\begin{equation}\label{eq:pcm-eta}
	d\eta(\theta,\nu)=(\tau_0+\nu)B\,\delta_0(\theta)+(\tau_0+\nu)C\,\delta_{-1}(\theta).
\end{equation}
Here $\delta_a$ denotes the unit point mass at $a$.
The quadratic remainder is
\begin{equation}\label{eq:pcm-nonlinearity}
	f(\nu,\phi)=(\tau_0+\nu)
	\begin{pmatrix}
		-v_2\phi_1(0)\phi_2(-1)\\
		0\\
		0
	\end{pmatrix}.
\end{equation}
Define $\mathcal A(\nu)$ and $R(\nu)$ by
\begin{equation}\label{eq:pcm-operator}
	\mathcal A(\nu)\phi=
	\begin{cases}
		\dfrac{d\phi(\theta)}{d\theta}, & \theta\in[-1,0),\\
		\displaystyle\int_{-1}^{0}d\eta(s,\nu)\phi(s), & \theta=0,
	\end{cases}
	\qquad
	R(\nu)\phi=
	\begin{cases}
		0, & \theta\in[-1,0),\\
		f(\nu,\phi), & \theta=0.
	\end{cases}
\end{equation}
Thus the rescaled retarded equation has the operator form
\begin{equation}\label{eq:pcm-operator-form}
	\dot x_t=\mathcal A(\nu)x_t+R(\nu)x_t.
\end{equation}

\subsection{Spectral Properties of the Operator \texorpdfstring{$\mathcal A$}{A} and Its Adjoint}
At $\nu=0$, write $\mathcal A=\mathcal A(0)$, the infinitesimal generator of the solution semigroup.
The adjoint operator is
\begin{equation}\label{eq:pcm-adjoint}
	\mathcal A^*\psi(s)=
	\begin{cases}
		-\dfrac{d\psi(s)}{ds}, & s\in(0,1],\\
		\displaystyle\int_{-1}^{0}d\eta^{T}(t,0)\psi(-t), & s=0,
	\end{cases}
\end{equation}
with respect to the bilinear form
\begin{equation}\label{eq:pcm-bilinear}
	\langle\psi,\phi\rangle=\bar\psi^{T}(0)\phi(0)-\int_{-1}^{0}\int_{0}^{\theta}\bar\psi^{T}(\xi-\theta)\,d\eta(\theta,0)\,\phi(\xi)\,d\xi.
\end{equation}
Let $\pm i\omega_0$ be the critical pair at $\tau=\tau_0$ and set $\Omega=\omega_0\tau_0$.
The eigenvector of $\mathcal A$ associated with $i\Omega$ is
\begin{equation}\label{eq:pcm-eigvec}
	q(\theta)=
	\begin{pmatrix}
		1\\
		\beta\\
		\gamma
	\end{pmatrix}
	e^{i\Omega\theta},
	\qquad
	\beta=\frac{p_1}{p_2+i\omega_0},
	\qquad
	\gamma=\frac{k_1}{k_3+i\omega_0}.
\end{equation}
The adjoint eigenvector of $\mathcal A^*$ associated with $-i\Omega$ is
\begin{equation}\label{eq:pcm-adjoint-eigvec}
	q^*(s)=D
	\begin{pmatrix}
		1\\
		\beta^*\\
		\gamma^*
	\end{pmatrix}
	e^{i\Omega s},
	\qquad
	\beta^*=-\frac{v_2V_\infty e^{i\Omega}}{p_2-i\omega_0},
	\qquad
	\gamma^*=\frac{v_3}{k_3-i\omega_0}.
\end{equation}
The normalisation $\langle q^*,q\rangle=1$ gives
\begin{equation}\label{eq:pcm-normalization}
	\overline D=
	\left[
		1+\beta\,\overline{\beta^*}+\gamma\,\overline{\gamma^*}
		-\tau_0v_2V_\infty\beta e^{-i\Omega}
	\right]^{-1}.
\end{equation}

\subsection{Center-Manifold Reduction}\label{app:cm-center}
Let $z(t)=\langle q^*,x_t\rangle$ and write
\begin{equation}\label{eq:pcm-W-def}
	W(t,\theta)=x_t(\theta)-z(t)q(\theta)-\bar z(t)\bar q(\theta).
\end{equation}
On the center manifold,
\begin{equation}\label{eq:pcm-W-expansion}
	W(z,\bar z,\theta)=W_{20}(\theta)\frac{z^2}{2}+W_{11}(\theta)z\bar z+W_{02}(\theta)\frac{\bar z^2}{2}+\cdots .
\end{equation}
The reduced equation is
\begin{equation}\label{eq:pcm-reduced}
	\dot z=i\Omega z+g(z,\bar z),
	\qquad
	g(z,\bar z)=g_{20}\frac{z^2}{2}+g_{11}z\bar z+g_{02}\frac{\bar z^2}{2}+g_{21}\frac{z^2\bar z}{2}+\cdots .
\end{equation}
Substitution of $x_t=W+zq+\bar z\bar q$ into \eqref{eq:pcm-nonlinearity} gives
\begin{equation}\label{eq:pcm-g-coeffs}
	\begin{aligned}
		g_{20} &=-2\tau_0v_2\overline D\,\beta e^{-i\Omega},\\
		g_{11} &=-\tau_0v_2\overline D\left(\beta e^{-i\Omega}+\bar\beta e^{i\Omega}\right),\\
		g_{02} &=-2\tau_0v_2\overline D\,\bar\beta e^{i\Omega}.
	\end{aligned}
\end{equation}
For $\theta\in[-1,0)$, the center-manifold invariance equations integrate to
\begin{equation}\label{eq:pcm-W-solutions}
	\begin{aligned}
		W_{20}(\theta)&=\frac{ig_{20}}{\Omega}q(0)e^{i\Omega\theta}+\frac{i\bar g_{02}}{3\Omega}\bar q(0)e^{-i\Omega\theta}+E_1e^{2i\Omega\theta},\\
		W_{11}(\theta)&=-\frac{ig_{11}}{\Omega}q(0)e^{i\Omega\theta}+\frac{i\bar g_{11}}{\Omega}\bar q(0)e^{-i\Omega\theta}+E_2.
	\end{aligned}
\end{equation}
Nonresonance makes $2i\omega_0 I-B-Ce^{-2i\Omega}$ invertible, and the boundary equation for $W_{20}$ gives
\begin{equation}\label{eq:pcm-E1-system}
	\left(2i\omega_0 I-B-Ce^{-2i\Omega}\right)E_1
	=
	2
	\begin{pmatrix}
		-v_2\beta e^{-i\Omega}\\
		0\\
		0
	\end{pmatrix}.
\end{equation}
Set
\begin{equation}\label{eq:pcm-Delta20}
	\Delta_{20}=2i\omega_0+\frac{v_3k_1}{k_3}+v_2V_\infty\frac{p_1e^{-2i\Omega}}{p_2+2i\omega_0}-v_3\frac{k_1}{k_3+2i\omega_0}.
\end{equation}
Solving for $E_1$ gives
\begin{equation}\label{eq:pcm-E1}
	E_1=
	-\frac{2v_2\beta e^{-i\Omega}}{\Delta_{20}}
	\begin{pmatrix}
		1\\
		\dfrac{p_1}{p_2+2i\omega_0}\\
		\dfrac{k_1}{k_3+2i\omega_0}
	\end{pmatrix}.
\end{equation}
For $W_{11}$, set
\begin{equation}\label{eq:pcm-S}
	S=\beta e^{-i\Omega}+\bar\beta e^{i\Omega}.
\end{equation}
Because $\lambda=0$ is excluded, $B+C$ is invertible, and the boundary equation for $W_{11}$ is
\begin{equation}\label{eq:pcm-E2-system}
	(B+C)E_2
	=
	\begin{pmatrix}
		v_2S\\
		0\\
		0
	\end{pmatrix},
\end{equation}
and hence
\begin{equation}\label{eq:pcm-E2}
	E_2=
	-S
	\begin{pmatrix}
		\dfrac{p_2}{V_\infty p_1}\\
		\dfrac{1}{V_\infty}\\
		\dfrac{k_1p_2}{k_3V_\infty p_1}
	\end{pmatrix}.
\end{equation}
Write $W_{jk}^{(\ell)}$ for the $\ell$-th component of $W_{jk}$.
The cubic normal-form coefficient is
\begin{equation}\label{eq:pcm-g21}
	g_{21}
	=
	-2\tau_0v_2\overline D
	\left[
		W_{11}^{(2)}(-1)+\frac{1}{2}W_{20}^{(2)}(-1)
		+\frac{1}{2}W_{20}^{(1)}(0)\bar\beta e^{i\Omega}
		+W_{11}^{(1)}(0)\beta e^{-i\Omega}
	\right].
\end{equation}
The coefficient $c_1(0)$ in \eqref{mukx} is
\begin{equation}\label{eq:pcm-c1}
	c_1(0)=\frac{i}{2\omega_0\tau_0}
	\left(g_{20}g_{11}-2|g_{11}|^2-\frac{1}{3}|g_{02}|^2\right)
	+\frac{1}{2}g_{21}.
\end{equation}

\subsection{Visitor-Delayed Evaluation}\label{app:cm-visitor}

At a Hopf point of any other common-delay placement with a simple critical pair, the preceding reduction applies after replacing the matrices $B$, $C$, and the quadratic term $f$.
For the visitor-delayed model \eqref{eq:visitor-delayed}, these quantities are
\[
	B=
	\begin{pmatrix}
		0&-v_2V_\infty&v_3\\
		p_1&-p_2&0\\
		0&0&-k_3
	\end{pmatrix},
	\qquad
	C=
	\begin{pmatrix}
		v_1-v_2P_\infty&0&0\\
		0&0&0\\
		k_1&0&0
	\end{pmatrix},
\]
and
\[
	f(\nu,\phi)=(\tau_0+\nu)
	\begin{pmatrix}
		-v_2\phi_1(-1)\phi_2(0)\\
		0\\
		0
	\end{pmatrix}.
\]
Repeating the reduction with these data gives $\operatorname{Re}c_1(0)=-0.978$ for the Scenario~1 parameter set and $+0.695$ for Scenario~3, confirming supercritical and subcritical directions, respectively.

\section{Absence of Hopf Bifurcation in the Caraballo Modification}\label{app:caraballo}

Caraballo et al. \cite{caraballo2019} restore invariance of the boundary $E=0$ in \eqref{eq:russu} by multiplying the delayed deterrence by the current environmental quality.
With $q=b-c\eta>0$ and $s=1-\eta>0$, their modified system is
\begin{equation}\label{eq:caraballo}
	\dot V=m_1E+m_2K-aV^2,\qquad
	\dot E=r(\bar P-E)-qV(t-\tau)E,\qquad
	\dot K=sV(t-\tau)-\delta K.
\end{equation}
Although Caraballo et al. do not analyse \eqref{eq:caraballo}, its positive equilibrium is locally asymptotically stable for every delay.

\begin{proposition}\label{prop:caraballo-nohopf}
	If \eqref{eq:caraballo} has a positive equilibrium $(V_{*},E_{*},K_{*})$ with $q,s>0$, then $(V_{*},E_{*},K_{*})$ is locally asymptotically stable for every $\tau\ge0$, and \eqref{eq:caraballo} undergoes no Hopf bifurcation.
\end{proposition}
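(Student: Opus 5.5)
The plan is to linearise \eqref{eq:caraballo} at $(V_*,E_*,K_*)$, put the characteristic equation into the form \eqref{eq:quasipolynomial}, and then show directly that $|\mathcal Q(\lambda)|<|\mathcal P(\lambda)|$ on the closed right half-plane. Since $|e^{-\lambda\tau}|\le1$ there, this excludes every root with $\operatorname{Re}\lambda\ge0$ for every $\tau\ge0$ at once. In particular it also excludes roots at $\tau=0$, so we need not appeal to Proposition~\ref{prop:nondelayed-stability}, which concerns a different system.

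\emph{Step 1 (linearisation).} Write $x=V-V_*$, $y=E-E_*$, $z=K-K_*$, and set $\alpha=2aV_*>0$ and $\beta=r+qV_*>0$. Expanding the determinant along its first row should give
\[
\mathcal P(\lambda)=(\lambda+\alpha)(\lambda+\beta)(\lambda+\delta),\qquad
\mathcal Q(\lambda)=m_1qE_*(\lambda+\delta)-m_2s(\lambda+\beta).
\]
The sign of $\mathcal Q$ is indefinite because of the second term. This is why we cannot simply read off the sign of $n$ from a table, and why we use the equilibrium relations instead.

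\emph{Step 2 (equilibrium identities).} From $\dot K=0$ we get $K_*=sV_*/\delta$. From $\dot V=0$ we get $aV_*^2=m_1E_*+m_2K_*$. Introduce $A=m_1E_*/V_*>0$ and $B=m_2s/\delta>0$. Then
\[
\alpha=2(A+B),\qquad \mathcal Q(\lambda)=A\,qV_*(\lambda+\delta)-B\,\delta(\lambda+\beta).
\]

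\emph{Step 3 (modulus bound).} Take $\operatorname{Re}\lambda\ge0$. Then $|\lambda+\beta|\ge\beta>qV_*$, $|\lambda+\delta|\ge\delta$ and $|\lambda+\alpha|\ge\alpha$. Applying the triangle inequality termwise gives
\[
|\mathcal Q(\lambda)|\le A|\lambda+\beta||\lambda+\delta|+B|\lambda+\delta||\lambda+\beta|=\tfrac{\alpha}{2}|\lambda+\beta||\lambda+\delta|\le\tfrac12|\mathcal P(\lambda)|.
\]
Since $|\mathcal P(\lambda)|\ge\alpha\beta\delta>0$ on this half-plane, we obtain $|\mathcal P(\lambda)+\mathcal Q(\lambda)e^{-\lambda\tau}|\ge\tfrac12|\mathcal P(\lambda)|>0$. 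Hence no characteristic root satisfies $\operatorname{Re}\lambda\ge0$, for any $\tau\ge0$. This covers $\lambda=0$, so $a_2+b_2\ne0$ holds automatically, and it covers every $\lambda=i\omega$, so $\Xi=0$. Local asymptotic stability for every $\tau$ follows by the linearised stability principle \cite{hale1993}. Because no root ever reaches the imaginary axis, no Hopf bifurcation can occur; alternatively one can cite Lemma~\ref{lem:root-counting}.

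The main obstacle is Step~2. We must recognise that $\alpha=2aV_*$ exactly dominates the two competing contributions to $\mathcal Q$ once the equilibrium relation $aV_*^2=m_1E_*+m_2sV_*/\delta$ is substituted; the factor $2$ coming from $V^2$ is what makes the bound close. If the termwise estimate turns out to be too crude, the fallback is to compute the coefficients $p,q,n$ of $\mathcal H$ from \eqref{eq:cubic-coefficients}. We would then check $n>0$ via $|m_1qE_*\delta-m_2s\beta|<\alpha\beta\delta$, and $p,q>0$ via $|m_1qE_*-m_2s|<\alpha\max(\beta,\delta)$, and conclude with Lemma~\ref{lem:root-counting}, proving stability at $\tau=0$ separately.
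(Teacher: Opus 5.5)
Your proof is correct, and it takes a different route from the paper's. The paper works inside its auxiliary-cubic framework. It computes $\mathcal H$ from \eqref{eq:cubic-coefficients} and uses the equilibrium inequalities $\alpha\beta>2m_1qE_*$ and $\alpha\delta>2m_2s$ (in your notation) to show that every coefficient of $\mathcal H$ is positive, so no purely imaginary root exists for any $\tau$. It then checks $\mathcal D(0,\tau)>0$, verifies the Routh--Hurwitz inequalities for the $\tau=0$ polynomial by hand, and extends stability to all $\tau\ge0$ by the crossing argument of Lemma~\ref{lem:root-counting}. You instead prove the pointwise domination $|\mathcal Q(\lambda)|\le\tfrac12|\mathcal P(\lambda)|$ on the whole closed right half-plane. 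Together with $|e^{-\lambda\tau}|\le1$, this excludes every root with $\operatorname{Re}\lambda\ge0$ for all $\tau\ge0$ at once. Both arguments rest on the same identity $aV_*=m_1E_*/V_*+m_2s/\delta$, that is, on the factor $2$ coming from $V^2$. Your version is shorter and more self-contained: the $\tau=0$ case is included automatically, you need no continuity-of-roots or no-entry-from-infinity argument, and you obtain an explicit stability margin. The paper's version keeps the appendix uniform with Section~\ref{sec:analysis}. There the $\mathcal H$/$\Xi$ machinery is indispensable because modulus domination fails; for example, in the perception-lag placement $n<0$ gives $|\mathcal Q(i\omega)|>|\mathcal P(i\omega)|$ for small $\omega$. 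Your stated fallback is essentially the paper's proof, but you do not need it.
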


\begin{proof}
	Write $A=2aV_{*}$, $B=r+qV_{*}$, $C=\delta$, $u=m_1qE_{*}$, and $v=m_2s$, all positive.
	The equilibrium gives $K_{*}=sV_{*}/C$ and, from $\dot V=0$,
	\begin{equation}\label{eq:caraballo-identity}
		aV_{*}=\frac{m_1E_{*}}{V_{*}}+\frac{v}{C}>\frac{v}{C}.
	\end{equation}
	Linearising \eqref{eq:caraballo} at the equilibrium yields the characteristic equation
	\begin{equation}\label{eq:caraballo-char}
		(\lambda+A)(\lambda+B)(\lambda+C)+\bigl[u(\lambda+C)-v(\lambda+B)\bigr]e^{-\lambda\tau}=0 ,
	\end{equation}
	of the form \eqref{eq:quasipolynomial} with $a_0=A+B+C$, $a_1=AB+AC+BC$, $a_2=ABC$, $b_0=0$, $b_1=u-v$, and $b_2=uC-vB$.
	A purely imaginary root $\lambda=i\omega$ requires, by \eqref{eq:cubic-in-z}--\eqref{eq:cubic-coefficients} with $\xi=\omega^{2}$,
	\begin{equation}\label{eq:caraballo-cubic}
		\mathcal H(\xi)=\xi^{3}+(A^{2}+B^{2}+C^{2})\,\xi^{2}+\bigl[A^{2}B^{2}+A^{2}C^{2}+B^{2}C^{2}-(u-v)^{2}\bigr]\xi+\bigl[A^{2}B^{2}C^{2}-(uC-vB)^{2}\bigr]=0 .
	\end{equation}
	From \eqref{eq:caraballo-identity} and $B>qV_{*}$ we obtain $AB>2u$ and $AC>2v$.
	Hence $(u-v)^{2}<\max(AB,AC)^{2}\le A^{2}B^{2}+A^{2}C^{2}$, so the $\xi$-coefficient is positive.
	Also, $ABC>uC$ and $ABC>vB$, which together imply $ABC>|uC-vB|$ and make the constant term positive.
	All coefficients of $\mathcal H$ are therefore positive, giving $\mathcal H(\xi)>0$ for every $\xi\ge0$, so \eqref{eq:caraballo-char} has no nonzero purely imaginary root for any $\tau\ge0$.
	Moreover $\mathcal D(0,\tau)=ABC+uC-vB>0$ for every $\tau\ge0$, so zero is never a characteristic root either.
	At $\tau=0$ the characteristic polynomial $\lambda^{3}+(A+B+C)\lambda^{2}+(AB+AC+BC+u-v)\lambda+(ABC+uC-vB)$ has positive coefficients.
	Moreover,
	\[
		\begin{aligned}
			&(A+B+C)(AB+AC+BC+u-v)-(ABC+uC-vB)\\
			&\quad=(A+B+C)(AB+AC+BC)-ABC+u(A+B)-v(A+C)>0,
		\end{aligned}
	\]
	To see the final inequality explicitly, the term $(A+B+C)(AB+AC+BC)-ABC$ is at least $AC(A+C)$, while \eqref{eq:caraballo-identity} gives $v<AC/2$.
	Therefore $AC(A+C)-v(A+C)>0$, and the remaining term $u(A+B)$ is also positive.
	Thus the Routh--Hurwitz inequality holds, and the equilibrium is locally asymptotically stable.
	Since no characteristic root crosses the imaginary axis as $\tau$ increases, stability persists for all $\tau\ge0$, and no Hopf bifurcation occurs.
\end{proof}

\section*{Declaration of generative AI and AI-assisted technologies in the manuscript preparation process}
During the preparation of this work the authors used Claude (Anthropic) and ChatGPT (OpenAI) in order to improve the language and readability of the manuscript and to assist with drafting and editing.
After using these tools, the authors reviewed and edited the content as needed and take full responsibility for the content of the publication.

\printbibliography

\end{document}